\documentclass{amsart}

\usepackage[T2A,T1]{fontenc}
\usepackage[utf8]{inputenc}
\usepackage[russian,english]{babel}

\usepackage{graphicx}
\usepackage{booktabs}
\usepackage{tikz}
\usepackage{comment}
\usepackage{overpic}
\usepackage[a4paper,margin=2.5cm]{geometry}
\usepackage{amsmath,amssymb}
\usepackage{bm}
\usepackage{amsthm} % For theorem environments
\usepackage{enumitem}

\newtheorem{thm}{Theorem}
\newtheorem{lem}[thm]{Lemma}

\theoremstyle{definition}
\newtheorem{dfn}[thm]{Definition}
\newtheorem{rem}[thm]{Remark}

\renewcommand{\phi}{\varphi}

\newcommand\RR{\mathbb{R}}
\newcommand\CC{\mathbb{C}}
\newcommand\ZZ{\mathbb{Z}}
\newcommand\Sph{\mathbb{S}}

\title{Rigidity and flexibility of discrete conjugate nets with flexible $3 \times 3$-subnets}
\author{Ivan Izmestiev}
\address{TU Wien, Wiedner Hauptstraße 8-10/104, A-1040 Wien, Austria}
\email{izmestiev@geometrie.tuwien.ac.at}
\thanks{This research was funded in whole or in part by the Austrian Science Fund (FWF) 10.55776/F77. For open access purposes, the authors have applied a CC BY public copyright license to any author accepted manuscript version arising from this submission.}
\author{Georg Nawratil}
\email{nawratil@geometrie.tuwien.ac.at}
\author{Sobhan Samareh Rad}
\email{sobhan.rad@tuwien.ac.at}
\author{Kiumars Sharifmoghaddam}
\email{ksharif@geometrie.tuwien.ac.at}
\date{\today}

\begin{document}

\begin{abstract}
Discrete conjugate nets (also known as quad-surfaces) are polyhedral surfaces made of quadrilaterals connected in the combinatorics of the square grid.
A generic discrete conjugate net is rigid.
The present article contains counterexamples to an erroneous statement that a non-degenerate discrete conjugate net with flexible $3 \times 3$-subnets is flexible and proves a corrected version of it under stronger non-degeneracy assumptions.
\end{abstract}

\maketitle

\section{Introduction}
A \emph{discrete conjugate net} is a polyhedral surface with quadrilateral faces that are connected in the combinatorics of the square grid.
Other terms used for an object of this kind are planar-quad surface (PQ-surface) or just quad-surface or PQ-net.
An \emph{isometric deformation} of a discrete conjugate net is a one-parameter smooth family of discrete conjugate nets where every face undergoes a rigid motion in space.
An isometric deformation where all faces ungergo the same rigid motion is called \emph{trivial}.
A discrete conjugate net that admits a non-trivial isometric deformation is called \emph{flexible}, otherwise it is called \emph{rigid}.
\emph{Infinitesimal} isometric deformations and infinitesimal rigidity and flexibility are defined in a similar way, by means of infinitesimal rigid motions (that is, velocity vector fields of one-parameter families of rigid motions).

Discrete conjugate nets are a structure-preserving discretization of conjugate nets in the classical differential geometry.
An isometric deformation of a discrete conjugate net is similar to an isometric deformations of a smooth surface preserving the conjugacy of some conjugate net.
The study of the latter deformations was initiated by Peterson and completed by Bianchi and Cosserat, see \cite{Finikoff1937, Finikoff1939} for an overview and references.

% In the classical differential geometry, a net on a smooth surface is its covering by curves such that through every point two (germs of) curves are passing.
% A net on a surface in $\RR^3$ is called conjugate, if at every point of the surface the tangent vectors to the two curves through this point are orthogonal with respect to the second fundamental form.
% It was noted back in the XIX century that the small quadrilaterals cut out by two pairs of curves in a conjugate net are planar of a higher order than small quadrilaterals in an arbitrary net.
% This explains why surfaces made of planar quadrilaterals are called discrete conjugate nets.

% An isometric deformation of a discrete conjugate net can thus be viewed as a discrete analog of an isometric deformation of a smooth surface preserving the conjugacy of some conjugate net.
% A study of deformations of this kind was initiated by Peterson in \cite{Peterson1868} and completed by Bianchi and Cosserat, see \cite{Finikoff1937, Finikoff1939} for an overview and references.

A simple count of degrees of freedom reveals that a generic $3 \times 3$ conjugate net is rigid and infinitesimally rigid.
Graf and Sauer have described in \cite{GrafSauer1931} two classes of flexible discrete conjugate nets of arbitrary size.
Kokotsakis proved in \cite{Kokotsakis1933} a criterium for infinitesimal flexibility of a $3 \times 3$-net and described a new class of flexible $3 \times 3$-nets.
In the last decades a renewed interest in the topic resulted in several works \cite{SBH08, Stachel2010, Naw11, Naw12} attempting to classify flexible discrete conjugate nets.
A complete classification of flexible $3 \times 3$-nets was finally achieved in \cite{Izmestiev2017}.
A detailed analysis of several classes of flexible $m \times n$-nets including algorithmic aspects of their construction was undertaken in \cite{Sharifmoghaddam2021, Izmestiev2024, Kilian2024}, see Figure \ref{fig:ExamplesDCN} for two examples.

\begin{figure}[ht]
\begin{center}
\includegraphics[width=.45\textwidth]{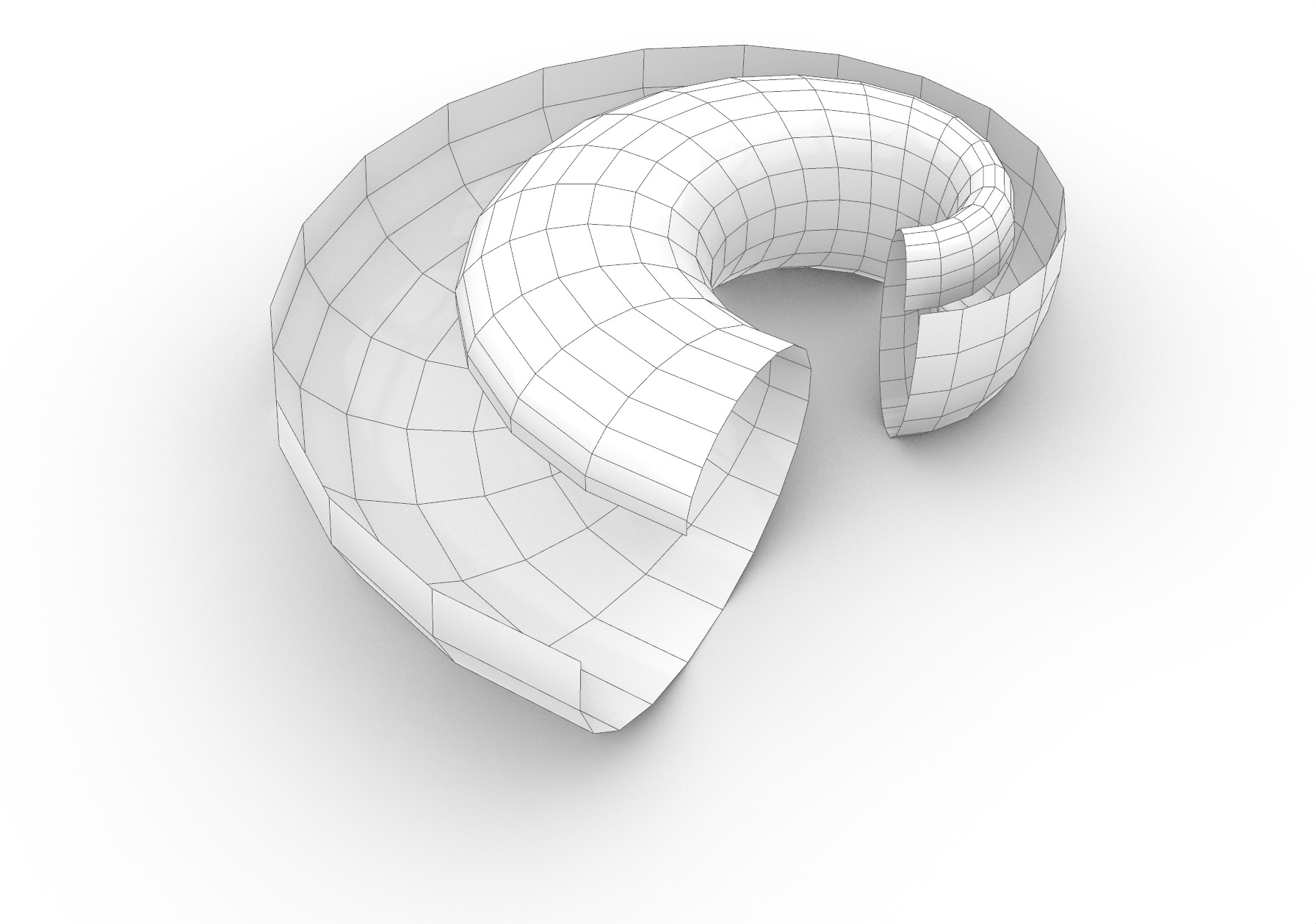} \includegraphics[width=.45\textwidth]{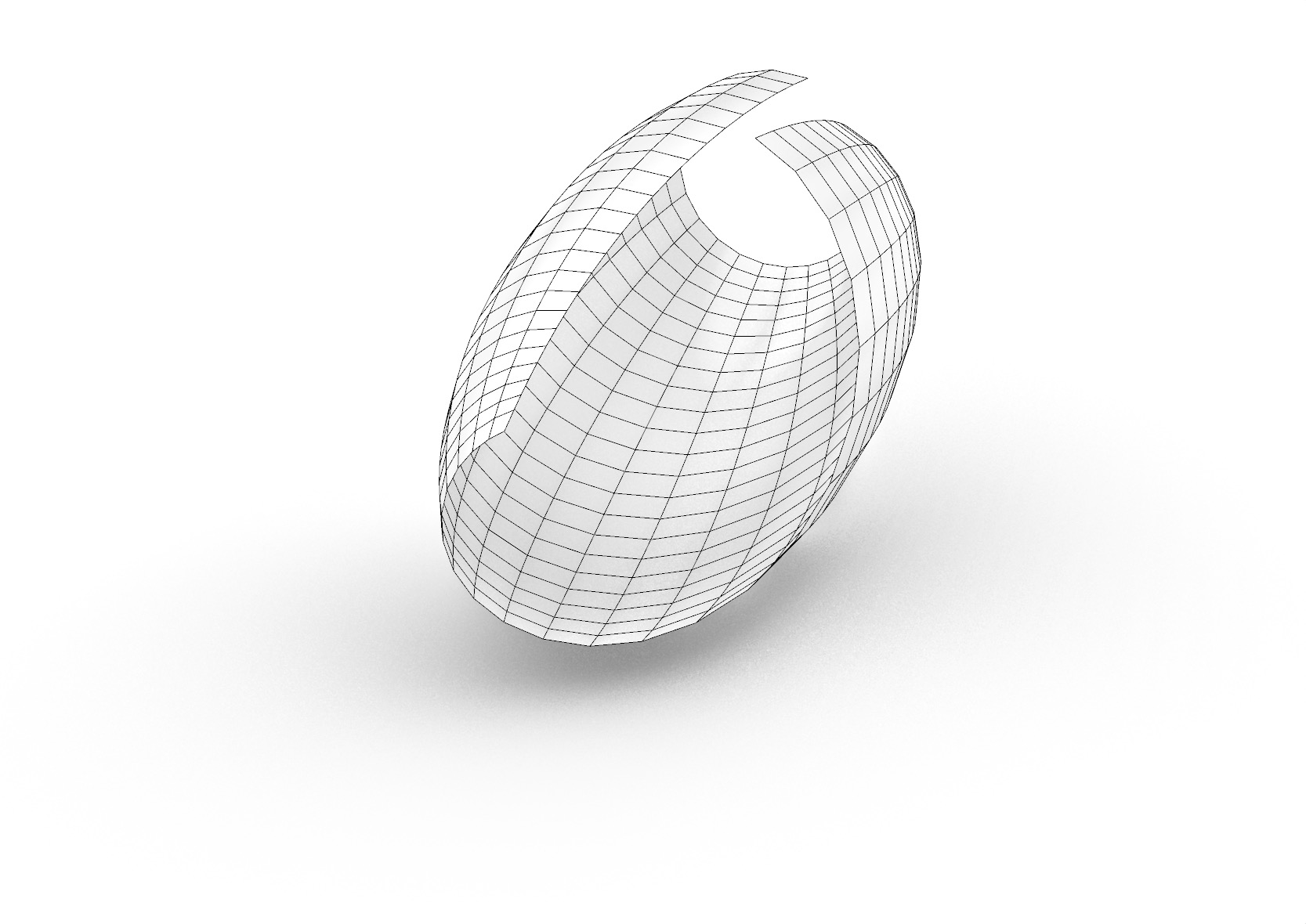}
\end{center}
\caption{Two examples of flexible discrete conjugate nets.}
\label{fig:ExamplesDCN}
\end{figure}

The article \cite{SBH08} by Schief, Bobenko, and Hoffmann discusses connections of flexible discrete conjugate nets to the theory of integrable systems.
Unfortunately, this article contains an inaccurate statement which has been widely cited, namely Theorem 3.2 that reads as follows.

\begin{thm}
\label{thm:SBH}
A non-degenerate discrete conjugate net is isometrically deformable if and only if all of its $3\times 3$-complexes are isometrically deformable.
\end{thm}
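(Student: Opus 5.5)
The ``only if'' direction of Theorem~\ref{thm:SBH} is immediate: restricting a non-trivial isometric deformation of the whole net to any $3\times 3$-subnet gives an isometric deformation of that subnet. I would then only have to check that the restriction is non-trivial, i.e.\ that no $3\times 3$-subnet stays rigid while the rest moves. A non-degeneracy assumption should exclude this, for instance the assumption that a deformation fixing all dihedral angles around one interior vertex is trivial on the whole net. So the real content is the ``if'' direction, and I would attack it by a local-to-global gluing argument.

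\textbf{Setup.} Every interior vertex of a discrete conjugate net has degree four. Its Gauss image is therefore a spherical quadrilateral with fixed side lengths, i.e.\ a spherical four-bar linkage. Its configuration space is a curve relating the dihedral angles (or their half-angle tangents) along consecutive edges. An isometric deformation of an $m\times n$-net is then the same as a one-parameter family of dihedral angles, one per interior edge, such that at every interior vertex the four angles lie on that vertex's configuration curve. A $3\times 3$-subnet has exactly one interior face. Its four interior vertices form a closed cycle of four-bar linkages, and flexibility of the subnet means that composing the four ``transfer relations'' around this cycle has a one-dimensional solution component. This is the viewpoint of Kokotsakis and of \cite{Izmestiev2017}.

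\textbf{Gluing by induction.} I would prove the ``if'' direction by induction on $m+n$, adding one row (or column) of quads at a time. Assume the $m\times n$-net flexes. Fix the dihedral angles it prescribes on its boundary edges. Each new vertex in the added row is determined, through its four-bar linkage, by the angle coming from the previous vertex and the angle coming from the old net. Going along the row, this fixes all new dihedral angles successively. The only constraints left are the closing conditions around each new interior face. Each such face is the central face of a $3\times 3$-subnet that is flexible by assumption. Using this, I would try to show that every closing condition holds identically along the deformation.

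\textbf{Main obstacle.} The difficulty is that the configuration curves are multi-valued: a four-bar linkage generally has two branches, and a flexible $3\times 3$-net may flex along a component that does not contain the branch forced by the neighbouring subnet. Two overlapping $3\times 3$-subnets share a $2\times 3$-strip. Each may be flexible, yet along different components of the strip's configuration space, so the local flexions cannot be glued. I would first look for a non-degeneracy hypothesis that makes the flexion germ of each $3\times 3$-subnet unique, for example irreducibility of the configuration curves, no folded or planar vertices, and a unique flexible component through the given position. Under such a hypothesis the induction above should go through. Without a strengthened hypothesis I expect it to fail. So I would then test the statement on $3\times 4$-nets built from two $3\times 3$-nets of different flexible classes in the classification of \cite{Izmestiev2017} that share a strip. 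Such a pair would be a candidate counterexample to the theorem in the generality stated.
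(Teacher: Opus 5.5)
\textbf{The statement is false as stated.} The step you leave open, showing that every closing condition holds identically, cannot be carried out. With non-degeneracy meaning only that no two edges at a vertex are collinear, the paper does not prove this statement but refutes it:
\begin{itemize}
\item Section~\ref{sec:Counterexamples} gives two rigid $3\times 4$-nets both of whose $3\times 3$-subnets are flexible, and Section~\ref{sec:IID} gives a third.
\item Only a corrected version, Theorem~\ref{thm:GlobFlex}, is proved, under the stronger hypothesis that no two adjacent faces are coplanar.
\end{itemize}
Your closing suspicion is therefore right. The mechanism you propose to test, two subnets flexing along different branches over their common strip, is exactly what happens in two of the examples. In Theorem~\ref{thm:Counterexample2} all vertices satisfy $\alpha+\gamma=\beta+\delta=\pi$, so their configuration spaces split into two components (Lemma~\ref{lem:ConfSpace}). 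The left and right subnets are then forced onto different components at the shared vertices (Figure~\ref{fig:Combinations}). In Theorem~\ref{thm:Counterexample3} the two subnets are of translational and of symmetric type, and the central strip admits a motion bifurcation.

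The ``only if'' direction needs no extra assumption. Fixing one dihedral angle at a vertex fixes the diagonal of the tangent image joining the two neighbours of that corner. This diagonal lies in $(0,\pi)$ precisely because no two edges are collinear, so the vertex is fixed locally. Hence a flex that is trivial on one $3\times 3$-subnet is trivial everywhere.

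\textbf{The proposed repair does not work.} Making the flexion germ of each $3\times 3$-subnet unique does not make your induction go through. In Theorem~\ref{thm:Counterexample3} each subnet has a one-dimensional space of infinitesimal flexes, since the weights in Figure~\ref{fig:InfRigWeights} are unique up to scaling. Each subnet therefore has a unique flex. No vertex is planar, and the only degenerate edges are the central top and bottom ones, whose dihedral angles vanish. Still the net is rigid.

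\textbf{The phenomenon you missed is a turning point of a shared angle.} It is cleanest in the first counterexample, where $\phi_0=0$ and $\phi_3=\pi$. By \eqref{eqn:CosineSystem}, the left subnet can only move into $\cos\phi_1<\frac13$ and the right one only into $\cos\phi_2>\frac13$, while $\phi_1=\phi_2$. No choice of branch is involved: $\phi_1$ is simply not a local coordinate on the configuration curve of the left vertex at $\phi_0=0$. In the third example, the edge joining the two central vertices is opposite to an edge with vanishing angle at each of them. Its angle therefore has a turning point at both vertices, and this is what produces the two branches of the central strip.

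\textbf{The missing idea is Theorem~\ref{thm:NoFlatSubmanifold}.} If all four dihedral angles at a vertex differ from $0$ and $\pi$, its configuration space is locally a smooth curve parametrized by any one of them. This holds because the mutual derivatives, computed via Lemma~\ref{lem:SphTrig}, are finite and nonzero. Under the assumption of no coplanar adjacent faces, the argument then runs as follows:
\begin{itemize}
\item the transfer maps $f_{21}$ between consecutive edges at a vertex are local diffeomorphisms;
\item flexibility of a $3\times 3$-subnet yields $f_{14}\circ f_{43}\circ f_{32}\circ f_{21}=\mathrm{id}$ around its central face;
\item your row-by-row closing argument becomes the paper's path-independence proof of Theorem~\ref{thm:GlobFlex}.
\end{itemize}
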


A discrete conjugate net is called in \cite{SBH08} degenerate if it has a pair of collinear edges that share a vertex.

Section \ref{sec:Counterexamples} of the present article contains two counterexamples to Theorem \ref{thm:SBH}, namely two rigid $3 \times 4$ conjugate nets which are non-degenerate in the above sense but both of whose $3 \times 3$-subnets are flexible.
In Section \ref{sec:Globalization} a corrected version of Theorem \ref{thm:SBH} is proved: if a discrete conjugate net has no pair of coplanar adjacent faces, then the flexibility of all $3 \times 3$-subnets implies the flexibility of the whole net.

Similar questions can be raised about the infinitesimal flexibility of discrete conjugate nets.
They are answered in Section \ref{sec:IID} in a similar way.
Firstly, in the absence of coplanar adjacent faces the infinitesimal flexibility of all $3 \times 3$-subnets implies the infinitesimal flexibility of the whole net.
Secondly there is an example of an infinitesimally rigid $3 \times 4$-discrete conjugate net with infinitesimally flexible $3 \times 3$-subcomplexes.

The final Section 5 contains various observations related to the problem: comparison of the counterexamples, analogs in planar kinematics, and discussion of non-simply-connected nets.

\section{Globalization theorem}
\label{sec:Globalization}
Throughout this section it is assumed that the quadrilaterals of which a discrete conjugate net is composed are strictly convex (that is with all angles $< 180^\circ$).
Through simple modifications our results can be extended to nets containing both strictly convex and non-convex faces (that is with angles $\ne 180^\circ$).
Self-intersections of a discrete conjugate net are allowed.

\subsection{Configuration space of a $2 \times 2$-complex}
A $2 \times 2$-complex of quadrilaterals has four interior edges incident to the single interior vertex.
A configuration of a $2 \times 2$-complex is uniquely determined by the values of the dihedral angles at these four edges.
The four dihedral angles are, of course, not independent of each other.
The following construction leads to a better understanding of their mutual dependence.

Take a unit sphere centered at the interior vertex of the complex.
Each of the interior edges or its extension meets this sphere in a point, and each face (or its extension) intersects the sphere along an arc of a great circle connecting the corresponding points.
One obtains a spherical quadrilateral whose side lengths and angle values are equal to the face angles, respectively the dihedral angles, at the interior vertex of the $2\times 2$-complex.
Let us call this quadrilateral the \emph{tangent image} of the vertex.
(Another commonly used term for this is spherical image.)
An isometric deformation of the $2 \times 2$-complex thus corresponds to a side length preserving deformation of the spherical quadrilateral which is the tangent image of its interior vertex.

\begin{dfn}
Let an orientation of the ambient space and an orientation of the polyhedral surface be fixed.
For an interior edge of the surface, choose any of the two possible orientations.
Define the \emph{exterior dihedral angle} at this edge as the angle by which the face on the right hand side has to be rotated so that to become coplanar but not overlapping with the face on the left hand side.
\end{dfn}
Observe that the value of the exterior dihedral angle does not depend on the choice of the edge orientation.

Figure \ref{fig:2x2TangentImage} shows a $2 \times 2$-complex together with the tangent image of its interior vertex and introduces the notation for the face angles as well as the exterior dihedral angles.

\begin{figure}[ht]
\begin{center}
\begin{picture}(0,0)%
\includegraphics{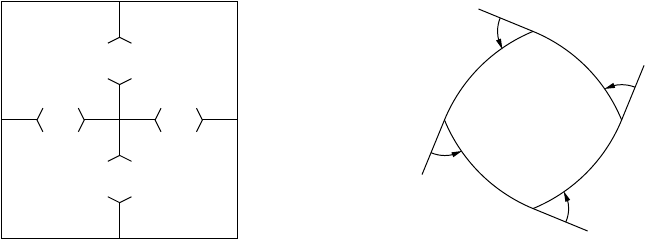}%
\end{picture}%
\setlength{\unitlength}{4144sp}%
\begin{picture}(4920,1824)(-461,-973)
\put(4152,261){\makebox(0,0)[lb]{\smash{\fontsize{9}{10.8}\usefont{T1}{ptm}{m}{n}{\color[rgb]{0,0,0}$\phi_+$}%
}}}
\put(3097,569){\makebox(0,0)[lb]{\smash{\fontsize{9}{10.8}\usefont{T1}{ptm}{m}{n}{\color[rgb]{0,0,0}$\psi_+$}%
}}}
\put(3896,-777){\makebox(0,0)[lb]{\smash{\fontsize{9}{10.8}\usefont{T1}{ptm}{m}{n}{\color[rgb]{0,0,0}$\psi_-$}%
}}}
\put(2964,239){\makebox(0,0)[lb]{\smash{\fontsize{9}{10.8}\usefont{T1}{ptm}{m}{n}{\color[rgb]{0,0,0}$\beta$}%
}}}
\put(3165,-631){\makebox(0,0)[lb]{\smash{\fontsize{9}{10.8}\usefont{T1}{ptm}{m}{n}{\color[rgb]{0,0,0}$\gamma$}%
}}}
\put(3902,440){\makebox(0,0)[lb]{\smash{\fontsize{9}{10.8}\usefont{T1}{ptm}{m}{n}{\color[rgb]{0,0,0}$\alpha$}%
}}}
\put(4072,-507){\makebox(0,0)[lb]{\smash{\fontsize{9}{10.8}\usefont{T1}{ptm}{m}{n}{\color[rgb]{0,0,0}$\delta$}%
}}}
\put(2839,-458){\makebox(0,0)[lb]{\smash{\fontsize{9}{10.8}\usefont{T1}{ptm}{m}{n}{\color[rgb]{0,0,0}$\phi_-$}%
}}}
\put(496,-16){\makebox(0,0)[lb]{\smash{\fontsize{9}{10.8}\usefont{T1}{ptm}{m}{n}{\color[rgb]{0,0,0}$\alpha$}%
}}}
\put(496,-196){\makebox(0,0)[lb]{\smash{\fontsize{9}{10.8}\usefont{T1}{ptm}{m}{n}{\color[rgb]{0,0,0}$\delta$}%
}}}
\put(361,344){\makebox(0,0)[lb]{\smash{\fontsize{9}{10.8}\usefont{T1}{ptm}{m}{n}{\color[rgb]{0,0,0}$\psi_+$}%
}}}
\put(361,-556){\makebox(0,0)[lb]{\smash{\fontsize{9}{10.8}\usefont{T1}{ptm}{m}{n}{\color[rgb]{0,0,0}$\psi_-$}%
}}}
\put(-89,-106){\makebox(0,0)[lb]{\smash{\fontsize{9}{10.8}\usefont{T1}{ptm}{m}{n}{\color[rgb]{0,0,0}$\phi_-$}%
}}}
\put(316,-16){\makebox(0,0)[lb]{\smash{\fontsize{9}{10.8}\usefont{T1}{ptm}{m}{n}{\color[rgb]{0,0,0}$\beta$}%
}}}
\put(316,-196){\makebox(0,0)[lb]{\smash{\fontsize{9}{10.8}\usefont{T1}{ptm}{m}{n}{\color[rgb]{0,0,0}$\gamma$}%
}}}
\put(811,-106){\makebox(0,0)[lb]{\smash{\fontsize{9}{10.8}\usefont{T1}{ptm}{m}{n}{\color[rgb]{0,0,0}$\phi_+$}%
}}}
\end{picture}%
\end{center}
\caption{A $2 \times 2$-complex and the tangent image of its interior vertex.}
\label{fig:2x2TangentImage}
\end{figure}

With the exterior dihedral angles $(\phi_+, \psi_+, \phi_-, \psi_-)$ used as coordinates, the configuration space of a spherical quadrilateral becomes a subset of $(\Sph^1)^4$, where $\Sph^1 = \RR/2\pi\ZZ$.
The configuration spaces of Euclidean and spherical quadrilaterals are well-studied.
In particular, they become algebraic subsets of $(\RR\mathrm{P}^1)^4$ after the tangent half-angle substitution.
In the spherical case, the equations relating each pair of dihedral angles were derived by Bricard \cite{Bricard1897}.
Note however that the configuration space is not always identical with the solution set of Bricard's system of equations, see \cite[Proposition 2.6]{Izmestiev2023} which deals with the Euclidean case.
A complete description of the configuration spaces of spherical quadrilaterals was given in \cite{Izmestiev2017}.

In this section, we are interested in the analytic rather than the algebraic properties of the configuration space, and only its part where all of the dihedral angles are different from $0$ and $\pi$ is of relevance.

\begin{thm}
\label{thm:NoFlatSubmanifold}
In a neighborhood of every point where all angles $\phi_+, \psi_+, \phi_-, \psi_-$ are different from $0$ and $\pi$ the configuration space of a spherical quadrilateral is a one-dimensional smooth submanifold of $(\Sph^1)^4$, which can be locally parametrized by any of the angles $\phi_+, \psi_+, \phi_-, \psi_-$.
\end{thm}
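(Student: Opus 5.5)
Parametrize configurations by the positions of the four vertices of the spherical quadrilateral modulo rotation, or equivalently by the four exterior dihedral angles subject to closing conditions. Concretely, I would fix the side of length $\alpha$ (between the vertices carrying $\phi_+$ and $\psi_+$) and use $\phi_+$ as a parameter. Given $\phi_+$, the vertex $A$ opposite to it along side $\delta$ is determined. The remaining vertex $C$ must lie at distance $\beta$ from $B$ (the endpoint of side $\alpha$) and at distance $\gamma$ from $A$, so it lies on the intersection of two circles on the sphere. The configuration space is then locally the zero set of a smooth map $F\colon (\Sph^1)^4 \to \RR^3$ expressing closure. The cleanest formulation is to write closure as a product of rotations equal to the identity in $SO(3)$, namely $R_{e_1}(\phi_+)R_{\alpha}R_{e_2}(\psi_+)R_\beta\cdots = \mathrm{Id}$. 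This gives a map $(\Sph^1)^4\to SO(3)$ whose preimage of the identity is the configuration space.

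By the implicit function theorem, it suffices to show that at a configuration with all dihedral angles $\ne 0,\pi$, the partial derivatives of the closure map with respect to any three of the four angles are linearly independent. In addition, the full differential must have rank $3$. Then the space is a smooth curve, and the remaining angle serves as a local coordinate.

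The derivative of the closure map with respect to the dihedral angle at a vertex is the infinitesimal rotation about the unit vector $v_i\in\Sph^2$ of that vertex, suitably conjugated; in the fixed frame it is just $v_i\in\RR^3\cong\mathfrak{so}(3)$. So the key claim is that any three of the four vertex vectors $v_1,\dots,v_4$ of the tangent image are linearly independent. Equivalently, no three vertices lie on a common great circle.

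Suppose $v_1,v_2,v_3$ lie on a great circle. The sides $v_1v_2$ and $v_2v_3$ are arcs of that great circle, of lengths equal to face angles in $(0,\pi)$ by strict convexity. Then the angle at $v_2$ between these sides is $0$ or $\pi$, i.e.\ the exterior dihedral angle there is $0$ or $\pi$, contradicting the hypothesis. One should check that a side of length in $(0,\pi)$ determines its great circle uniquely, so that the arc really lies on the common circle; this holds since its endpoints are not antipodal.

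Thus every triple is independent. The differential therefore has rank $3$, and eliminating any three angles in favour of the fourth gives the local parametrization. I expect the main obstacle to be the careful sign and frame bookkeeping identifying $\partial/\partial\phi_i$ of the closure map with the vertex vector $v_i$ (up to a common conjugation). An alternative is to argue directly that the two-circle intersection is transversal exactly when the angle at $C$ is not $0,\pi$, and that $d\psi/d\phi\ne 0$ by the spherical sine rule. I would try the Lie-algebra formulation first since it treats all four angles symmetrically.
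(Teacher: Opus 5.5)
Your proof is correct, but it takes a genuinely different route from the paper's.

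The paper cuts the tangent image along the diagonal joining the $\psi_+$- and $\psi_-$-vertices. It uses the diagonal length $\lambda$ as a local parameter; both triangles are non-degenerate since $\phi_\pm\ne 0,\pi$. With the auxiliary Lemma~\ref{lem:SphTrig} it then computes explicit derivatives, such as $d\phi_-/d\phi_+=\sin\alpha\sin\eta_+/(\sin\beta\sin\theta_+)$ and $d\psi_+/d\phi_+=\sin\psi_-\sin\eta_+/(\sin\phi_+\sin\theta_-)$, and reads off their non-vanishing from the hypothesis. The remaining dependencies are obtained by symmetry.

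You instead realize the configuration space as the fibre over the identity of the closure map $(\Sph^1)^4\to SO(3)$. Its differential at a closed configuration has columns $\pm v_1,\dots,\pm v_4$ in one common frame, so the sign and frame bookkeeping you worried about cannot affect any rank statement. Everything then reduces to ``no three $v_i$ on a great circle''. Your ``suppose $v_1,v_2,v_3$'' is without loss of generality because any three vertices of a $4$-cycle contain one adjacent to the other two. This is exactly why the angle hypothesis suffices.

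Your approach is symmetric in the four angles, needs no trigonometric lemma, and extends verbatim to vertices of arbitrary degree. It also identifies the tangent line of the configuration space with the kernel of $(v_1\ v_2\ v_3\ v_4)$. After rescaling by edge lengths, that kernel is the solution space of \eqref{eqn:Weights} at the vertex. This makes visible that Theorems~\ref{thm:GlobFlex} and~\ref{thm:GlobInfFlex} rest on the same non-coplanarity condition.

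The paper's computation, in turn, stays within elementary spherical trigonometry and gives the derivatives directly in terms of the angles of the quadrilateral. Its cost is an auxiliary lemma and an appeal to symmetry for the other pairs of angles.
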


\begin{proof}
It suffices to show that near every point where all angles $\phi_+, \psi_+, \phi_-, \psi_-$ are different from $0$ and $\pi$ any three of these angles are smooth functions of the fourth angle, and the derivatives of these functions do not vanish.

Let us study the dependence of $\phi_-$ and $\psi_+$ on $\phi_+$
The other dependencies follow from these two by symmetry.
Since $\phi_+, \phi_- \notin \{0,\pi\}$, the quadrilateral can be cut into two non-degenerate triangles by the diagonal from the $\alpha\beta$-vertex to the $\gamma\delta$-vertex.
Denote the length of this diagonal by $\lambda$, and the interior angles adjacent to this diagonal by $\eta_\pm, \theta_\pm$ as shown in Figure \ref{fig:CutTriangles}.

\begin{figure}[ht]
\begin{center}
\begin{picture}(0,0)%
\includegraphics{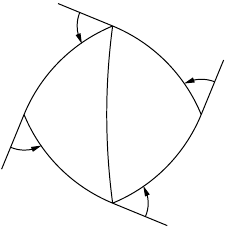}%
\end{picture}%
\setlength{\unitlength}{4144sp}%
\begin{picture}(1716,1716)(2743,-919)
\put(4152,261){\makebox(0,0)[lb]{\smash{\fontsize{9}{10.8}\usefont{T1}{ptm}{m}{n}{\color[rgb]{0,0,0}$\phi_+$}%
}}}
\put(3097,569){\makebox(0,0)[lb]{\smash{\fontsize{9}{10.8}\usefont{T1}{ptm}{m}{n}{\color[rgb]{0,0,0}$\psi_+$}%
}}}
\put(3896,-777){\makebox(0,0)[lb]{\smash{\fontsize{9}{10.8}\usefont{T1}{ptm}{m}{n}{\color[rgb]{0,0,0}$\psi_-$}%
}}}
\put(2964,239){\makebox(0,0)[lb]{\smash{\fontsize{9}{10.8}\usefont{T1}{ptm}{m}{n}{\color[rgb]{0,0,0}$\beta$}%
}}}
\put(3165,-631){\makebox(0,0)[lb]{\smash{\fontsize{9}{10.8}\usefont{T1}{ptm}{m}{n}{\color[rgb]{0,0,0}$\gamma$}%
}}}
\put(3902,440){\makebox(0,0)[lb]{\smash{\fontsize{9}{10.8}\usefont{T1}{ptm}{m}{n}{\color[rgb]{0,0,0}$\alpha$}%
}}}
\put(4072,-507){\makebox(0,0)[lb]{\smash{\fontsize{9}{10.8}\usefont{T1}{ptm}{m}{n}{\color[rgb]{0,0,0}$\delta$}%
}}}
\put(2839,-458){\makebox(0,0)[lb]{\smash{\fontsize{9}{10.8}\usefont{T1}{ptm}{m}{n}{\color[rgb]{0,0,0}$\phi_-$}%
}}}
\put(3601,-106){\makebox(0,0)[lb]{\smash{\fontsize{9}{10.8}\usefont{T1}{ptm}{m}{n}{\color[rgb]{0,0,0}$\lambda$}%
}}}
\put(3406,380){\makebox(0,0)[lb]{\smash{\fontsize{9}{10.8}\usefont{T1}{ptm}{m}{n}{\color[rgb]{0,0,0}$\theta_+$}%
}}}
\put(3395,-588){\makebox(0,0)[lb]{\smash{\fontsize{9}{10.8}\usefont{T1}{ptm}{m}{n}{\color[rgb]{0,0,0}$\theta_-$}%
}}}
\put(3652,368){\makebox(0,0)[lb]{\smash{\fontsize{9}{10.8}\usefont{T1}{ptm}{m}{n}{\color[rgb]{0,0,0}$\eta_+$}%
}}}
\put(3636,-585){\makebox(0,0)[lb]{\smash{\fontsize{9}{10.8}\usefont{T1}{ptm}{m}{n}{\color[rgb]{0,0,0}$\eta_-$}%
}}}
\end{picture}%
\end{center}
\caption{A spherical quadrilateral cut into two triangles.}
\label{fig:CutTriangles}
\end{figure}

In the $\alpha\delta\lambda$-triangle the side lengths $\alpha$ and $\delta$ are fixed.
The configuration space of this triangle in the neighborhood of the given point (where $\phi_+ \ne 0, \pi$) can be parametrized by any of two variables $\phi_+$ and $\lambda$.
Similarly for the $\beta\gamma\lambda$-triangle.
By Lemma \ref{lem:SphTrig} below one has
\[
\frac{d\phi_+}{d\lambda} = - \frac{1}{\sin\alpha\sin\eta_+}, \quad
\frac{d\phi_-}{d\lambda} = -\frac{1}{\sin\beta\sin\theta_+},
\]
which implies
\[
\frac{d\phi_-}{d\phi_+} = \frac{\sin\alpha\sin\eta_+}{\sin\beta\sin\theta_+} \ne 0.
\]
By Lemma \ref{lem:SphTrig} one also has
\[
\frac{d\psi_+}{d\lambda} = -\frac{d\eta_+}{d\lambda} - \frac{d\theta_+}{d\lambda}
= -\frac{\cot\eta_- + \cot\theta_-}{\sin\lambda}
= -\frac{\sin\psi_-}{\sin\lambda \sin\eta_- \sin\theta_-}
\]
which implies
\[
\frac{d\psi_+}{d\phi_+} = \frac{\sin\psi_- \sin\alpha \sin\eta_+}{\sin\lambda \sin\eta_- \sin\theta_-}
= \frac{\sin\psi_- \sin\eta_+}{\sin\phi_+ \sin\theta_-} \ne 0.
\]
\end{proof}

\begin{lem}
\label{lem:SphTrig}
Let in a spherical triangle with side lengths $a, b, c$ and the values of respective opposite angles $\alpha, \beta, \gamma$ the lengths $a$ and $b$ be fixed.
Then the derivatives of the angle values with respect to $c$ are
\[
\frac{d\alpha}{dc} = -\frac{\cot\beta}{\sin c}, \quad
\frac{d\beta}{dc} = -\frac{\cot\alpha}{\sin c}, \quad
\frac{d\gamma}{dc} = \frac{1}{\sin a \sin\beta} = \frac{1}{\sin b \sin\alpha}.
\]
\end{lem}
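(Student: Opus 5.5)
The plan is to derive all three formulas from the spherical law of cosines and the spherical law of sines, by implicit differentiation with $a,b$ held fixed.

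\emph{The angle $\gamma$ opposite the varying side.} I start from
\[
\cos c = \cos a\cos b + \sin a \sin b \cos\gamma .
\]
Differentiating in $c$ gives $-\sin c = -\sin a\sin b\sin\gamma\,\frac{d\gamma}{dc}$, so
\[
\frac{d\gamma}{dc} = \frac{\sin c}{\sin a\sin b\sin\gamma}.
\]
The law of sines gives $\sin c/\sin\gamma = \sin b/\sin\beta = \sin a/\sin\alpha$. Substituting yields $\frac{1}{\sin a\sin\beta}$ and also $\frac{1}{\sin b\sin\alpha}$, which are the two claimed expressions.

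\emph{The angle $\alpha$.} I use the cosine law for side $a$:
\[
\cos a = \cos b\cos c + \sin b\sin c\cos\alpha .
\]
With $a,b$ fixed, differentiation in $c$ gives
\[
0 = -\cos b\sin c + \sin b\cos c\cos\alpha - \sin b\sin c\sin\alpha\,\frac{d\alpha}{dc}.
\]
The combination $\sin b\cos c\cos\alpha - \cos b\sin c$ is the standard ``five-part'' expression. It equals $-\sin a\cos\beta$, which follows from the analogue formula $\sin a\cos\beta = \cos b\sin c - \sin b\cos c\cos\alpha$. If I do not want to quote that formula, I can verify it by expanding $\cos b$ with the cosine law for side $b$ and using $\sin^2 c = 1-\cos^2 c$.

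This gives $\frac{d\alpha}{dc} = -\frac{\sin a\cos\beta}{\sin b\sin c\sin\alpha}$. The law of sines $\sin a/\sin\alpha = \sin b/\sin\beta$ then reduces it to $-\frac{\cot\beta}{\sin c}$.

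\emph{The angle $\beta$.} The formula for $d\beta/dc$ follows from the previous step by swapping the roles $(a,\alpha)\leftrightarrow(b,\beta)$.

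The only step needing care is identifying the five-part expression $\sin b\cos c\cos\alpha - \cos b\sin c = -\sin a\cos\beta$, including its sign. Everything else is mechanical. The divisions are legitimate in the nondegenerate situation where the lemma is applied, since then $\sin a,\sin b,\sin c,\sin\alpha,\sin\beta,\sin\gamma\neq 0$.
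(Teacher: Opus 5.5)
Your proof is correct. The computation of $d\gamma/dc$ is the same as the paper's: differentiate the cosine law for side $c$, then apply the sine law.

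For $d\alpha/dc$ you take a different route. The paper differentiates the sine law $\sin\alpha = \sin a\sin\gamma/\sin c$ and inserts the already established value of $d\gamma/dc$. It then simplifies with the sine law and the polar cosine law $\cos\gamma = -\cos\alpha\cos\beta + \sin\alpha\sin\beta\cos c$. You instead differentiate the cosine law for side $a$ and close with the five-part formula $\sin a\cos\beta = \cos b\sin c - \sin b\cos c\cos\alpha$.

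Your version has two advantages. It does not depend on the $\gamma$-formula. It also only divides by $\sin b\sin c\sin\alpha$, which is nonzero for every nondegenerate triangle. The paper's computation divides by $\cos\alpha$, so as written it needs $\alpha \neq \pi/2$ (or a continuity argument), even though $\cos\alpha$ cancels at the end.

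The price you pay is relying on the five-part rule, which is less commonly quoted than the polar cosine law. It does follow in two lines from the cosine laws for sides $a$ and $b$ together with $\sin^2 c + \cos^2 c = 1$. Your sketch of that verification should also invoke the cosine law for side $a$, to eliminate $\sin b\sin c\cos\alpha$; the law for side $b$ alone is not enough.
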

\begin{proof}
Let us prove the third equation first.
It is done by differentiating the spherical cosine law:
\begin{multline*}
\cos c = \cos a \cos b + \sin a \sin b \cos\gamma \Rightarrow
-\sin c = - \frac{d\gamma}{dc}\sin a \sin b \sin\gamma\\
\Rightarrow \frac{d\gamma}{dc} = \frac{\sin c}{\sin a \sin b \sin \gamma} = \frac{1}{\sin a \sin\beta} = \frac{1}{\sin b \sin\alpha},
\end{multline*}
where in the last two equations the spherical sine law was used.

For the first equation differentiate the spherical sine law and use the already proved formula for $\frac{d\gamma}{dc}$:
\begin{multline*}
\sin\alpha = \sin a \frac{\sin\gamma}{\sin c} \Rightarrow
\frac{d\alpha}{dc} = \frac{\sin a}{\cos\alpha} \frac{\frac{\cos\gamma \sin c}{\sin a \sin\beta} - \sin\gamma \cos c}{\sin^2 c} = \frac{\cos\gamma \sin c - \sin a \sin\beta \sin\gamma \cos c}{\cos\alpha \sin\beta \sin^2 c}\\
= \frac{\cos\gamma \sin c - \sin c \sin\beta \sin\alpha \cos c}{\cos\alpha \sin\beta \sin^2 c} = \frac{\cos\gamma - \sin\alpha \sin\beta \cos c}{\cos\alpha \sin\beta \sin c} = \frac{-\cos\alpha \cos\beta}{\cos\alpha \sin\beta \sin c} = - \frac{\cot\beta}{\sin c},
\end{multline*}
where the spherical sine law and the dual spherical cosine law $\cos\gamma = -\cos\alpha\cos\beta + \sin\alpha\sin\beta cos c$ were used.

The second equation is proved similarly to the first one.
\end{proof}

\subsection{Globalization in the absence of coplanar adjacent faces}

\begin{thm}
\label{thm:GlobFlex}
If an $m \times n$ discrete conjugate net has no pair of coplanar adjacent faces and all of its $3 \times 3$-subnets are flexible, then the net is flexible as a whole.
\end{thm}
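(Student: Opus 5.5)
The plan is to encode a configuration of the whole net by its exterior dihedral angles at the interior edges, and then build the deformation of the big net by gluing the deformations of the $3\times 3$-subnets. Coplanar adjacent faces are excluded, so every interior dihedral angle of the initial configuration differs from $0$ and $\pi$. Hence Theorem \ref{thm:NoFlatSubmanifold} applies at every interior vertex: near the initial configuration, the tangent image of each interior vertex moves along a smooth curve. On this curve any one of the four exterior dihedral angles is a local parameter, and the other three are smooth functions of it with nonvanishing derivatives.

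\textbf{Propagation along a row.} Fix one interior edge $e_0$ and use its dihedral angle $\phi$ as the global parameter. At an interior vertex $v$ incident to $e_0$, Theorem \ref{thm:NoFlatSubmanifold} determines the other three dihedral angles at $v$ as functions of $\phi$. Moving to the next vertex along a line of edges determines all angles there, and so on. In this way every interior dihedral angle gets a candidate value as a function of $\phi$, because the adjacency graph of interior vertices is a grid and hence connected. The deformation is well defined provided the values do not depend on the path chosen.

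\textbf{Consistency.} Path independence reduces to elementary cycles in the grid of interior vertices, namely the four interior vertices of a $3\times 3$-subnet. Going around such a square of vertices $v_1v_2v_3v_4$ starting from an edge angle $\phi$, we must return to the same angle. This is where flexibility of the $3\times 3$-subnet enters. That subnet has a nontrivial isometric deformation $\Gamma$, and along $\Gamma$ its central face moves while all its dihedral angles stay different from $0,\pi$ near the initial point. So each of the four vertex curves is traversed with nonconstant angles: by Theorem \ref{thm:NoFlatSubmanifold}, if one angle at a vertex were constant, all would be, and hence all angles in the subnet would be constant, i.e. the deformation would be trivial. Thus the dihedral angle at a shared edge is a local parameter of $\Gamma$, and the functions obtained by going around the square coincide with those realized by $\Gamma$. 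In particular, going around the square returns to $\phi$ exactly, at least on a neighborhood of the initial value.

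\textbf{Reconstructing the net and the main obstacle.} Since the complex is simply connected, consistent dihedral angles with the tangent-image closing condition at each interior vertex (automatic, since we stay on each vertex's configuration curve) determine a net up to rigid motion. This follows by developing face by face: every loop is generated by loops around interior vertices. The main difficulty is to make the local uniqueness rigorous at each step. The deformation of a $3\times3$-subnet through a given starting angle must be unique locally, so that different subnets sharing a vertex agree. Here the smoothness and one-dimensionality from Theorem \ref{thm:NoFlatSubmanifold} are what matter: each vertex curve is locally a graph over any of its angles. Therefore angles propagated from a common edge value agree on a common parameter neighborhood. Finally, finitely many shrinkings of the neighborhood give a nontrivial one-parameter deformation of the whole $m\times n$ net.
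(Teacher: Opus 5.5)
Your proposal is correct and is essentially the paper's proof. Theorem~\ref{thm:NoFlatSubmanifold} gives local diffeomorphisms between the dihedral angles at each vertex; these are propagated along paths (you use paths of interior vertices, the paper uses paths of edges); and path independence reduces to the closing condition $f_{14}\circ f_{43}\circ f_{32}\circ f_{21}=\mathrm{id}$ around each square, which comes from the flexibility of the corresponding $3\times 3$-subnet. One step you leave implicit, as the paper does: non-constancy of the angles along $\Gamma$ only shows that this composition is the identity on an interval that may have the initial angle as an endpoint. So your claim that the edge angle is a local parameter of $\Gamma$ should instead be justified by noting that the transition maps are real-analytic (they come from the spherical cosine law), hence the composition, being the identity on an interval, is the identity on a full neighborhood.
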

\begin{proof}
For every interior edge $e$ of the given $m \times n$-net denote by $\phi_e^0$ the exterior dihedral angle at $e$.
It suffices to show that there are functions $\phi_e(t) \colon (-\varepsilon, \varepsilon) \to \RR$ for some $\varepsilon > 0$ that satisfy $\phi_e(0) = \phi_e^0$ and are compatible around every interior vertex of the net, that is the quadruple of functions on the edges adjacent to a vertex parametrizes a subset of the configuration space of the corresponding $2\times 2$-subcomplex.

Since $\phi_e(0) \notin \{0, \pi\}$, Theorem \ref{thm:NoFlatSubmanifold} says that the angles at the edges $e_1, e_2, e_3, e_4$ adjacent to the same vertex can be parametrized as $\phi_1(t), \phi_2(t), \phi_3(t), \phi_4(t)$ so that each of these functions is a diffeomorphism from $(-\varepsilon, \varepsilon)$ to some interval of the real line.
This also means that there is a unique diffeomorphism $f_{21}$ such that $\phi_2 = f_{21} \circ \phi_1$.
In order to define functions $\phi_e(t)$ for all edges $e$ simultaneously, start with an arbitrary edge $e_1$ and choose an arbitrary edge path $e_1, e_2, \ldots, e_n = e$.
Define $\phi_1(t)$ around $t=0$ arbitrarily, for example as $\phi_1(t) = \phi_1^0 + t$, and put
\[
\phi_n = f_{n,n-1} \circ \cdots \circ f_{21} \circ \phi_1.
\]
It has to be checked that this is well-defined, that is independent of the choice of an edge path connecting $e_1$ with $e$.
This follows by a standard argument from the local compatibility:
\[
f_{14} \circ f_{43} \circ f_{32} \circ f_{21} = \mathrm{id},
\]
where $e_1, e_2, e_3, e_4$ are the consequtive edges of any square of the $m \times n$-net.
The local compatibility, in turn, is satisfied because of the flexibility of every $3\times 3$-subnet.
\end{proof}
%
% Since no pair of adjacent faces is coplanar, none of the dihedral angles vanish.
% Thus by Theorem \ref{thm:NoFlatSubmanifold} the configuration space of any $2\times 2$-subcomplex can be locally parametrized by any of the dihedral angles involved.
% In particular, for any two edges $e_1$ and $e_2$ sharing a vertex there is a smooth function $f_{21}$ such that in the configuration space of the $2 \times 2$-subcomplex surrounding that vertex the dihedral angles at $e_1$ and $e_2$
%
% (need to make this more precise)
%
% satisfy the equation
% \begin{equation}
% \label{eqn:Transport}
%$\phi_2 = f_{21}(\phi_1)$.
% \end{equation}

%
%
% We will show that it is possible to assign to every interior edge of the net a smooth function so that changing all dihedral angles according to these functions results in an isometric deformation of the net.
% For this it suffices to ensure that the functions are compatible around each vertex,

\section{Counterexamples to the globalization theorem of Bobenko--Hoffmann--Schief}
\label{sec:Counterexamples}
\subsection{First counterexample}\label{sec:ex1}
Our first example of a rigid $3 \times 4$-net with flexible $3 \times 3$-subnets is shown in Figure \ref{fig:example13d}.
Figure \ref{fig:example1schem} introduces the notation for planar and exterior dihedral angles of this polyhedron.
The angle assignment is symmetric with respect to the horizontal dashed line; this results in the symmetry of the polyhedron with respect to a plane.
Our particular choice of angles is
\[
\alpha = \frac{\pi}4, \quad \beta = \frac{2\pi}{3}, \quad \gamma = \frac{\pi}{3}, \quad \delta = \frac{\pi}4,
\]
but any other choice with $2\alpha + \delta < \pi$ will work as well.

\begin{figure}[ht]
    \centering
    \includegraphics[width=0.58\textwidth]{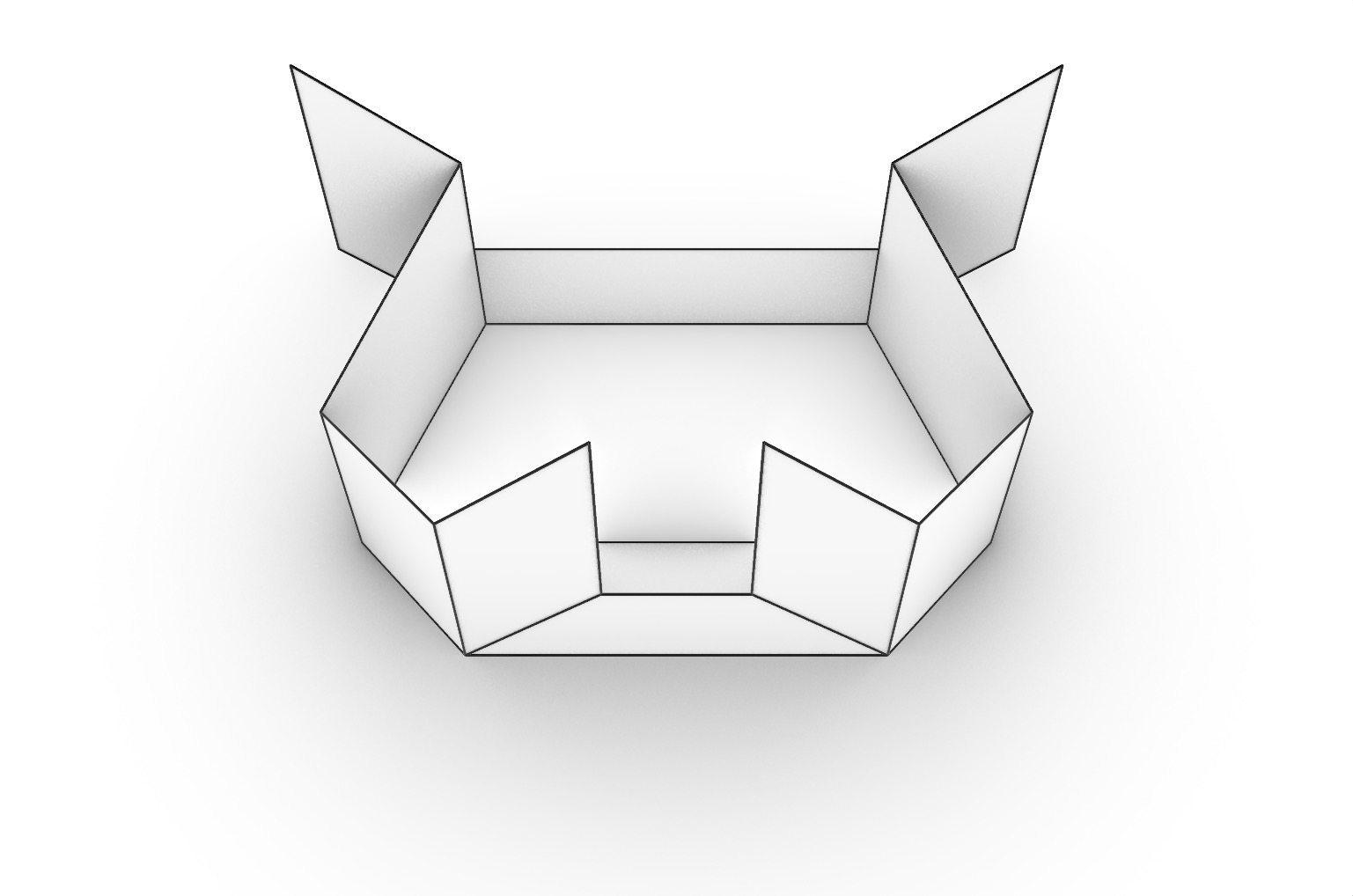}
        \includegraphics[width=0.4\textwidth]{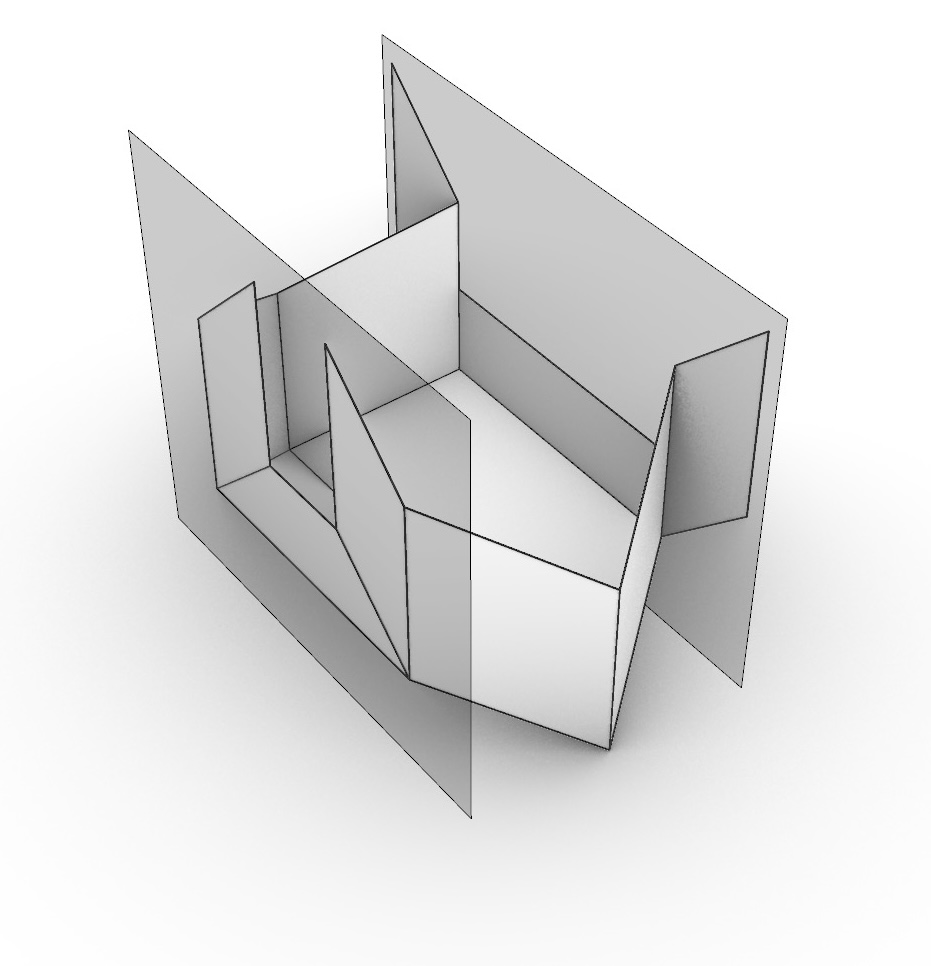}
    \caption{Two views of the first counterexample. In the right figure each of the grey planes contains three faces.}
    \label{fig:example13d}
\end{figure}

\begin{figure}[ht]
\begin{center}
\begin{picture}(0,0)%
\includegraphics{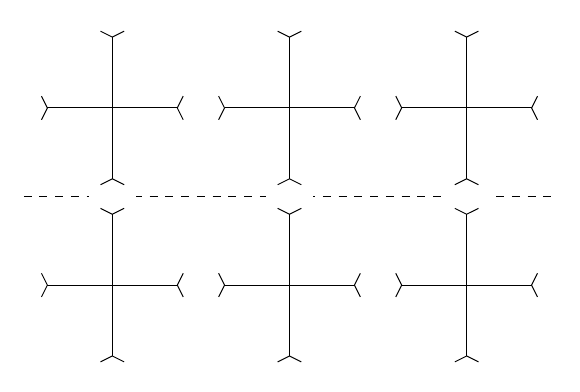}%
\end{picture}%
\setlength{\unitlength}{4144sp}%
\begin{picture}(4412,2972)(-180,-4472)
\put(721,-3796){\makebox(0,0)[lb]{\smash{\fontsize{9}{10.8}\usefont{T1}{ptm}{m}{n}{\color[rgb]{0,0,0}$\beta$}%
}}}
\put(721,-3616){\makebox(0,0)[lb]{\smash{\fontsize{9}{10.8}\usefont{T1}{ptm}{m}{n}{\color[rgb]{0,0,0}$\beta$}%
}}}
\put(721,-2266){\makebox(0,0)[lb]{\smash{\fontsize{9}{10.8}\usefont{T1}{ptm}{m}{n}{\color[rgb]{0,0,0}$\beta$}%
}}}
\put(721,-2446){\makebox(0,0)[lb]{\smash{\fontsize{9}{10.8}\usefont{T1}{ptm}{m}{n}{\color[rgb]{0,0,0}$\beta$}%
}}}
\put(2071,-3616){\makebox(0,0)[lb]{\smash{\fontsize{9}{10.8}\usefont{T1}{ptm}{m}{n}{\color[rgb]{0,0,0}$\gamma$}%
}}}
\put(2071,-3796){\makebox(0,0)[lb]{\smash{\fontsize{9}{10.8}\usefont{T1}{ptm}{m}{n}{\color[rgb]{0,0,0}$\gamma$}%
}}}
\put(2071,-2266){\makebox(0,0)[lb]{\smash{\fontsize{9}{10.8}\usefont{T1}{ptm}{m}{n}{\color[rgb]{0,0,0}$\gamma$}%
}}}
\put(2071,-2446){\makebox(0,0)[lb]{\smash{\fontsize{9}{10.8}\usefont{T1}{ptm}{m}{n}{\color[rgb]{0,0,0}$\gamma$}%
}}}
\put(3421,-3796){\makebox(0,0)[lb]{\smash{\fontsize{9}{10.8}\usefont{T1}{ptm}{m}{n}{\color[rgb]{0,0,0}$\delta$}%
}}}
\put(3421,-3616){\makebox(0,0)[lb]{\smash{\fontsize{9}{10.8}\usefont{T1}{ptm}{m}{n}{\color[rgb]{0,0,0}$2\alpha+\delta$}%
}}}
\put(3421,-2266){\makebox(0,0)[lb]{\smash{\fontsize{9}{10.8}\usefont{T1}{ptm}{m}{n}{\color[rgb]{0,0,0}$\delta$}%
}}}
\put(3421,-2446){\makebox(0,0)[lb]{\smash{\fontsize{9}{10.8}\usefont{T1}{ptm}{m}{n}{\color[rgb]{0,0,0}$2\alpha+\delta$}%
}}}
\put(541,-2266){\makebox(0,0)[lb]{\smash{\fontsize{9}{10.8}\usefont{T1}{ptm}{m}{n}{\color[rgb]{0,0,0}$\alpha$}%
}}}
\put(541,-2446){\makebox(0,0)[lb]{\smash{\fontsize{9}{10.8}\usefont{T1}{ptm}{m}{n}{\color[rgb]{0,0,0}$\alpha$}%
}}}
\put(541,-3616){\makebox(0,0)[lb]{\smash{\fontsize{9}{10.8}\usefont{T1}{ptm}{m}{n}{\color[rgb]{0,0,0}$\alpha$}%
}}}
\put(541,-3796){\makebox(0,0)[lb]{\smash{\fontsize{9}{10.8}\usefont{T1}{ptm}{m}{n}{\color[rgb]{0,0,0}$\alpha$}%
}}}
\put(1891,-2266){\makebox(0,0)[lb]{\smash{\fontsize{9}{10.8}\usefont{T1}{ptm}{m}{n}{\color[rgb]{0,0,0}$\gamma$}%
}}}
\put(1891,-2446){\makebox(0,0)[lb]{\smash{\fontsize{9}{10.8}\usefont{T1}{ptm}{m}{n}{\color[rgb]{0,0,0}$\gamma$}%
}}}
\put(1891,-3616){\makebox(0,0)[lb]{\smash{\fontsize{9}{10.8}\usefont{T1}{ptm}{m}{n}{\color[rgb]{0,0,0}$\gamma$}%
}}}
\put(1891,-3796){\makebox(0,0)[lb]{\smash{\fontsize{9}{10.8}\usefont{T1}{ptm}{m}{n}{\color[rgb]{0,0,0}$\gamma$}%
}}}
\put(3241,-2266){\makebox(0,0)[lb]{\smash{\fontsize{9}{10.8}\usefont{T1}{ptm}{m}{n}{\color[rgb]{0,0,0}$\beta$}%
}}}
\put(3241,-2446){\makebox(0,0)[lb]{\smash{\fontsize{9}{10.8}\usefont{T1}{ptm}{m}{n}{\color[rgb]{0,0,0}$\beta$}%
}}}
\put(3241,-3616){\makebox(0,0)[lb]{\smash{\fontsize{9}{10.8}\usefont{T1}{ptm}{m}{n}{\color[rgb]{0,0,0}$\beta$}%
}}}
\put(3241,-3796){\makebox(0,0)[lb]{\smash{\fontsize{9}{10.8}\usefont{T1}{ptm}{m}{n}{\color[rgb]{0,0,0}$\beta$}%
}}}
\put(-110,-2356){\makebox(0,0)[lb]{\smash{\fontsize{9}{10.8}\usefont{T1}{ptm}{m}{n}{\color[rgb]{0,0,0}$\phi_0$}%
}}}
\put(1249,-3706){\makebox(0,0)[lb]{\smash{\fontsize{9}{10.8}\usefont{T1}{ptm}{m}{n}{\color[rgb]{0,0,0}$\phi_1$}%
}}}
\put(1249,-2356){\makebox(0,0)[lb]{\smash{\fontsize{9}{10.8}\usefont{T1}{ptm}{m}{n}{\color[rgb]{0,0,0}$\phi_1$}%
}}}
\put(2593,-2356){\makebox(0,0)[lb]{\smash{\fontsize{9}{10.8}\usefont{T1}{ptm}{m}{n}{\color[rgb]{0,0,0}$\phi_2$}%
}}}
\put(2593,-3706){\makebox(0,0)[lb]{\smash{\fontsize{9}{10.8}\usefont{T1}{ptm}{m}{n}{\color[rgb]{0,0,0}$\phi_2$}%
}}}
\put(3943,-3706){\makebox(0,0)[lb]{\smash{\fontsize{9}{10.8}\usefont{T1}{ptm}{m}{n}{\color[rgb]{0,0,0}$\phi_3$}%
}}}
\put(3943,-2356){\makebox(0,0)[lb]{\smash{\fontsize{9}{10.8}\usefont{T1}{ptm}{m}{n}{\color[rgb]{0,0,0}$\phi_3$}%
}}}
\put(-110,-3706){\makebox(0,0)[lb]{\smash{\fontsize{9}{10.8}\usefont{T1}{ptm}{m}{n}{\color[rgb]{0,0,0}$\phi_0$}%
}}}
\put(586,-4381){\makebox(0,0)[lb]{\smash{\fontsize{9}{10.8}\usefont{T1}{ptm}{m}{n}{\color[rgb]{0,0,0}$\psi_1$}%
}}}
\put(1936,-4381){\makebox(0,0)[lb]{\smash{\fontsize{9}{10.8}\usefont{T1}{ptm}{m}{n}{\color[rgb]{0,0,0}$\psi_2$}%
}}}
\put(3286,-4381){\makebox(0,0)[lb]{\smash{\fontsize{9}{10.8}\usefont{T1}{ptm}{m}{n}{\color[rgb]{0,0,0}$\psi_3$}%
}}}
\put(3286,-1681){\makebox(0,0)[lb]{\smash{\fontsize{9}{10.8}\usefont{T1}{ptm}{m}{n}{\color[rgb]{0,0,0}$\psi_3$}%
}}}
\put(1936,-1681){\makebox(0,0)[lb]{\smash{\fontsize{9}{10.8}\usefont{T1}{ptm}{m}{n}{\color[rgb]{0,0,0}$\psi_2$}%
}}}
\put(586,-1681){\makebox(0,0)[lb]{\smash{\fontsize{9}{10.8}\usefont{T1}{ptm}{m}{n}{\color[rgb]{0,0,0}$\psi_1$}%
}}}
\put(1936,-3031){\makebox(0,0)[lb]{\smash{\fontsize{9}{10.8}\usefont{T1}{ptm}{m}{n}{\color[rgb]{0,0,0}$\psi_2$}%
}}}
\put(3286,-3031){\makebox(0,0)[lb]{\smash{\fontsize{9}{10.8}\usefont{T1}{ptm}{m}{n}{\color[rgb]{0,0,0}$\psi_3$}%
}}}
\put(586,-3031){\makebox(0,0)[lb]{\smash{\fontsize{9}{10.8}\usefont{T1}{ptm}{m}{n}{\color[rgb]{0,0,0}$\psi_1$}%
}}}
\end{picture}%
\end{center}
\caption{Angles at the interior vertices and the interior edges in the polyhedron from Figure \ref{fig:example13d}.}
\label{fig:example1schem}
\end{figure}

\begin{thm}
A $3 \times 4$-net with planar angles as shown in Figure \ref{fig:example1schem} has a realization in $\RR^3$ with dihedral angles
\[
\phi_0 = 0, \quad \phi_1 = \phi_2, \quad \phi_3 = \pi
\]
as shown in Figure \ref{fig:example13d}, and this realization is not flexible.
At the same time, both $3 \times 3$-subnets of this realization are flexible.
\end{thm}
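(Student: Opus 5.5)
The plan has three parts: construct the realization explicitly, exhibit a flex of each $3\times 3$-subnet, and show that the whole net admits none. We cannot use Theorem \ref{thm:GlobFlex} or Theorem \ref{thm:NoFlatSubmanifold} directly, because the dihedral angles $\phi_0=0$ and $\phi_3=\pi$ are exactly the excluded values. The rigidity argument therefore has to be a local analysis of the configuration spaces of the $2\times 2$-complexes at these degenerate points.

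\emph{Existence.} We build the net through the tangent images of its six interior vertices, using the plane symmetry. At each interior vertex the face angles are, by Figure \ref{fig:example1schem}, symmetric with respect to the horizontal line. The tangent image is then a spherical quadrilateral that is symmetric with respect to the great circle through the endpoints of the horizontal edges, so it reduces to a spherical triangle.
\begin{itemize}
\item At the left vertices, $\phi_0=0$ means the two $\alpha$-sides lie on one great arc of length $2\alpha$. The tangent image is then an isosceles triangle with sides $2\alpha,\beta,\beta$, and it determines $\psi_1$ and $\phi_1$.
\item At the middle vertices (all angles $\gamma$ on the left, $\beta$ on the right), we propagate $\phi_1$ to $\phi_2=\phi_1$ and obtain $\psi_2$. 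We need to check that the kite with sides $\gamma,\gamma,\beta,\beta$ indeed allows $\phi_1=\phi_2$.
\item At the right vertices, $\phi_3=\pi$ folds the sides $\delta$ and $2\alpha+\delta$ onto each other. This leaves an arc of length $2\alpha$, and again an isosceles triangle with sides $2\alpha,\beta,\beta$.
\end{itemize}
The equality of the two triangles $(2\alpha,\beta,\beta)$ is the reason for the choice $2\alpha+\delta$. Consistency of the $\psi$'s along each vertical line is guaranteed by the symmetry. The condition $2\alpha+\delta<\pi$ keeps all face angles in $(0,\pi)$. We then build the faces and check that they close up into planar convex quadrilaterals, for instance by coordinates; this also reproduces the grey planes in Figure \ref{fig:example13d}.

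\emph{Flexibility of the subnets.}
\begin{itemize}
\item For the left $3\times3$-subnet, we deform symmetrically and keep $\phi_0\equiv 0$ by varying the isosceles triangle $(2\alpha,\beta,\beta)$. This determines $\phi_1(t)$ and $\psi_1(t)$. The middle kites then carry $\phi_1$ to $\phi_2$ and give $\psi_2$. Since the subnet has only two columns of vertices, no further compatibility condition arises.
\item For the right subnet, we similarly keep $\phi_3\equiv\pi$, the folded branch, and let $\phi_2$ vary.
\end{itemize}

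\emph{Rigidity.} This is the main obstacle. At a vertex with $\phi_0=0$, the configuration space of the spherical quadrilateral is singular: several branches cross. Besides the symmetric branch with $\phi_0\equiv0$, there are branches where $\phi_0$ leaves $0$ and the symmetry of the top and bottom $\psi$'s breaks. The same happens at $\phi_3=\pi$.
\begin{enumerate}
\item We first compute the branches through each degenerate point, with their first derivatives (or Puiseux expansions). Along each branch we record how $\phi_1$, together with the upper and lower copies of $\psi_1$, varies. Likewise at the right vertices we record $\phi_2$ and the two copies of $\psi_3$.
\item We then transport these through the middle column, where all angles are different from $0$ and $\pi$. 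There Theorem \ref{thm:NoFlatSubmanifold} gives smooth one-dimensional germs, whose derivatives can be computed with Lemma \ref{lem:SphTrig}.
\item Finally we show that no pair of branches, one from the left and one from the right, induces the same germ $(\phi_1,\phi_2,\psi_2^{\mathrm{top}},\psi_2^{\mathrm{mid}},\psi_2^{\mathrm{bot}})$, except for the constant one.
\end{enumerate}
I expect the symmetric branches to fail to match because, with the specific values of $\alpha,\beta,\gamma,\delta$, the relation between $\phi_1'$ and $\phi_2'$ they induce differs from the one the middle kite requires. I expect the asymmetric branches to fail because they force opposite signs in the asymmetry of $\psi_2$. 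If first-order comparison does not separate the branches, I would pass to second order or use the explicit algebraic description of the branches from \cite{Izmestiev2017}.
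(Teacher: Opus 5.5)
\textbf{Flexibility of the subnets.} Your flexibility argument does not work. If $\phi_0\equiv 0$, the tangent image at the left vertex is the spherical triangle with sides $2\alpha,\beta,\beta$. A spherical triangle with fixed side lengths is rigid, so there is nothing to ``vary'': $\phi_1$ and $\psi_1$ are frozen. Then the middle vertex is frozen too. Its tangent image is a rhombus with all four sides $\gamma$, not a kite $\gamma,\gamma,\beta,\beta$; that is why $\phi_1=\phi_2$ is automatic. The same objection applies to keeping $\phi_3\equiv\pi$ on the right. The flex of each $3\times3$-subnet necessarily moves $\phi_0$ off $0$, resp.\ $\phi_3$ off $\pi$.

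\textbf{Local picture at the flat vertices.} Your picture here is also inaccurate. The left tangent image is a kite $\alpha,\alpha,\beta,\beta$. Cut it, or the right one, along the diagonal joining the two $\psi$-vertices. This shows that near the given point each configuration space is a single smooth arc, on which $\phi_0$ (resp.\ $\phi_3$) is a regular parameter. There is no branch with $\phi_0\equiv0$ and no crossing of branches. What does happen is that $\phi_1$ (resp.\ $\phi_2$) has a strict extremum along that arc.

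\textbf{Why a first-order comparison cannot succeed.} At first order the left and right vertices can move independently, since their tangent directions have $\phi_1'=0$, resp.\ $\phi_2'=0$, while the middle vertex stays still. So the net is infinitesimally flexible, as the paper notes in its final section. The obstruction is one of sign, and your plan does not identify it.

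\textbf{The missing idea.} Apply the spherical cosine law in the two triangles cut off by that diagonal. This gives relations that hold on the entire configuration spaces and are linear in the cosines:
\[
\sin^2\alpha\cos\phi_0-\sin^2\beta\cos\phi_1=\sin^2\beta-\sin^2\alpha,\qquad \cos\phi_1=\cos\phi_2,
\]
\[
\sin^2\beta\cos\phi_2-\sin\delta\sin(2\alpha+\delta)\cos\phi_3=\cos^2\beta-\cos\delta\cos(2\alpha+\delta).
\]
For the given values these read $2\cos\phi_0-3\cos\phi_1=1$ and $3\cos\phi_2-2\cos\phi_3=3$.

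\textbf{How it finishes the proof.} Now $\cos\phi_0\le1$ forces $\cos\phi_1\le\frac13$, while $\cos\phi_3\ge-1$ forces $\cos\phi_2\ge\frac13$. Hence $\cos\phi_1=\cos\phi_2=\frac13$, $\phi_0=0$ and $\phi_3=\pi$ throughout any deformation, and the $\psi$'s are then frozen as well. This gives rigidity of the lower $2\times4$ half, and by the mirror symmetry, of the whole net. Dropping the first or the last relation leaves a half-line of admissible cosines. That half-line is exactly the flex of the right, resp.\ left, $3\times3$-subnet, extended to the upper row by reflection.
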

\begin{proof}
For symmetry reasons it suffices to consider the $2\times 4$-subnet below the dashed line.
Figure \ref{fig:2coupling} shows the tangent images of the three interior vertices, coupled at the dihedral angles they are sharing in a scissors-like way.
Any adjacent pair of these three spherical quadrilaterals is flexible while all three together form a rigid framework.
\end{proof}

\begin{figure}[ht]
\begin{center}
\begin{picture}(0,0)%
\includegraphics{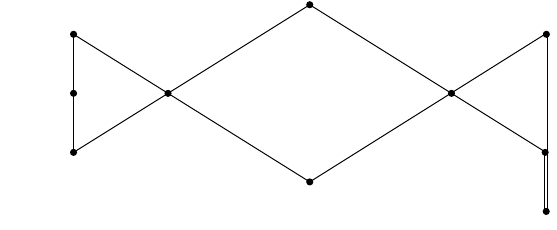}%
\end{picture}%
\setlength{\unitlength}{4144sp}%
\begin{picture}(4238,1741)(-559,-196)
\put(-134,569){\makebox(0,0)[lb]{\smash{\fontsize{9}{10.8}\usefont{T1}{ptm}{m}{n}{\color[rgb]{0,0,0}$\alpha$}%
}}}
\put(-134,1019){\makebox(0,0)[lb]{\smash{\fontsize{9}{10.8}\usefont{T1}{ptm}{m}{n}{\color[rgb]{0,0,0}$\alpha$}%
}}}
\put(316,479){\makebox(0,0)[lb]{\smash{\fontsize{9}{10.8}\usefont{T1}{ptm}{m}{n}{\color[rgb]{0,0,0}$\beta$}%
}}}
\put(316,1109){\makebox(0,0)[lb]{\smash{\fontsize{9}{10.8}\usefont{T1}{ptm}{m}{n}{\color[rgb]{0,0,0}$\beta$}%
}}}
\put(1171,1199){\makebox(0,0)[lb]{\smash{\fontsize{9}{10.8}\usefont{T1}{ptm}{m}{n}{\color[rgb]{0,0,0}$\gamma$}%
}}}
\put(1171,389){\makebox(0,0)[lb]{\smash{\fontsize{9}{10.8}\usefont{T1}{ptm}{m}{n}{\color[rgb]{0,0,0}$\gamma$}%
}}}
\put(2296,1199){\makebox(0,0)[lb]{\smash{\fontsize{9}{10.8}\usefont{T1}{ptm}{m}{n}{\color[rgb]{0,0,0}$\gamma$}%
}}}
\put(2296,389){\makebox(0,0)[lb]{\smash{\fontsize{9}{10.8}\usefont{T1}{ptm}{m}{n}{\color[rgb]{0,0,0}$\gamma$}%
}}}
\put(3151,1109){\makebox(0,0)[lb]{\smash{\fontsize{9}{10.8}\usefont{T1}{ptm}{m}{n}{\color[rgb]{0,0,0}$\beta$}%
}}}
\put(3451,157){\makebox(0,0)[lb]{\smash{\fontsize{9}{10.8}\usefont{T1}{ptm}{m}{n}{\color[rgb]{0,0,0}$\delta$}%
}}}
\put(3664,562){\makebox(0,0)[lb]{\smash{\fontsize{9}{10.8}\usefont{T1}{ptm}{m}{n}{\color[rgb]{0,0,0}$2\alpha+\delta$}%
}}}
\put(2767,950){\makebox(0,0)[lb]{\smash{\fontsize{9}{10.8}\usefont{T1}{ptm}{m}{n}{\color[rgb]{0,0,0}$\phi_2$}%
}}}
\put(3664,-139){\makebox(0,0)[lb]{\smash{\fontsize{9}{10.8}\usefont{T1}{ptm}{m}{n}{\color[rgb]{0,0,0}$\phi_3=\pi$}%
}}}
\put(3151,504){\makebox(0,0)[lb]{\smash{\fontsize{9}{10.8}\usefont{T1}{ptm}{m}{n}{\color[rgb]{0,0,0}$\beta$}%
}}}
\put(630,932){\makebox(0,0)[lb]{\smash{\fontsize{9}{10.8}\usefont{T1}{ptm}{m}{n}{\color[rgb]{0,0,0}$\phi_1$}%
}}}
\put(-544,808){\makebox(0,0)[lb]{\smash{\fontsize{9}{10.8}\usefont{T1}{ptm}{m}{n}{\color[rgb]{0,0,0}$\phi_0 = 0$}%
}}}
\end{picture}%
\end{center}
\caption{Tangent images of three interior vertices below the dashed line in Figure \ref{fig:example1schem}.}
\label{fig:2coupling}
\end{figure}

For a future reference let us write down the equations connecting the angles $\phi_i$ in Figure \ref{fig:2coupling}.
Clearly, the central quadrilateral enforces $\phi_1 = \phi_2$ in the neighborhood of the given configuration.
Subdividing each of the other two quadrilaterals by a diagonal and applying the spherical law of cosines one obtains
\begin{gather*}
\sin^2\alpha \cos\phi_0 - \sin^2\beta \cos\phi_1 = \sin^2\beta - \sin^2\alpha,\\
\sin^2\beta \cos\phi_2 - \sin\delta \sin(2\alpha + \delta) \cos\phi_3 = \cos^2\beta - \cos\delta \cos(2\alpha + \delta),
\end{gather*}
which for our particular values of $\alpha, \beta, \delta$ becomes
\[
2 \cos\phi_0 - 3 \cos\phi_1 = 1, \qquad
3 \cos\phi_2 - 2 \cos\phi_3 = 3.
\]
Thus one has the following system of equations on the variable angles $\phi_0, \phi_1, \phi_2, \phi_3$:
\begin{equation}
\label{eqn:CosineSystem}
\begin{cases}
2 \cos\phi_0 - 3 \cos\phi_1 = 1\\
\phi_1 = \phi_2\\
3 \cos\phi_2 - 2 \cos\phi_3 = 3
\end{cases}
\end{equation}
In terms of the cosines of the angles this system is linear and has a one-parameter solution set.
However only one solution has all cosines within the range $[-1, 1]$, namely
\[
\cos\phi_0 = 1, \quad \cos\phi_1 = \cos\phi_2 = \frac13, \quad \cos\phi_3 = -1.
\]
At the same time, removing from \eqref{eqn:CosineSystem} the first or the last equation expands the solution set to a half-line.
This corresponds to the inflexibility of the linkage in Figure \ref{fig:2coupling} and the flexibility of the corresponding sublinkages.

\subsection{Second counterexample}\label{sec:ex2}
Our second example is depicted in Figure \ref{fig:example-2}.
In the middle of the top row is a planar $3 \times 4$-net which will be shown to be rigid, while both of its $3 \times 3$-subnets (top row left and right) are flexible, with their deformed states shown in the bottom row.

\begin{figure}[ht]
    \centering
    \includegraphics[width=0.32\textwidth]{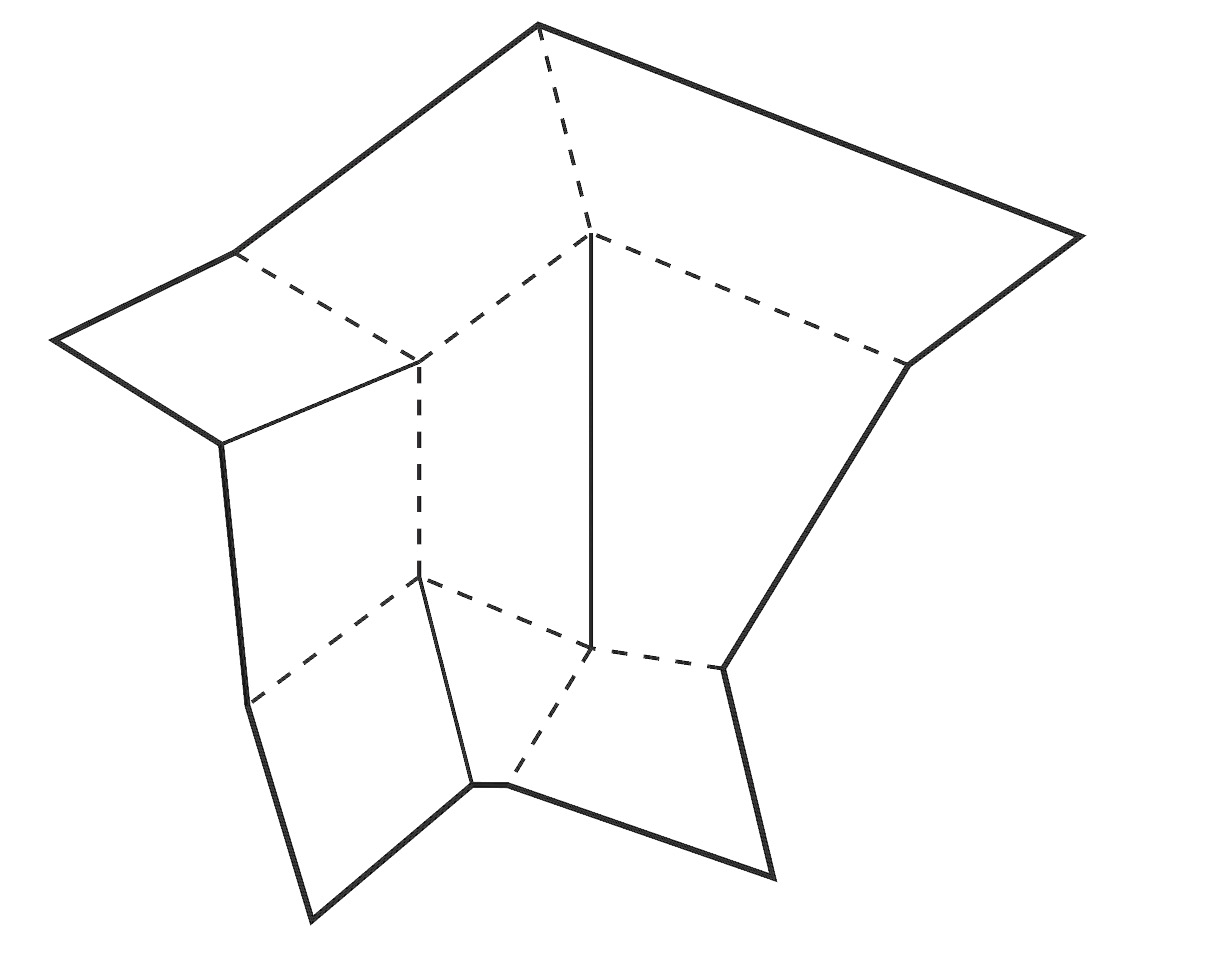}
        \includegraphics[width=0.32\textwidth]{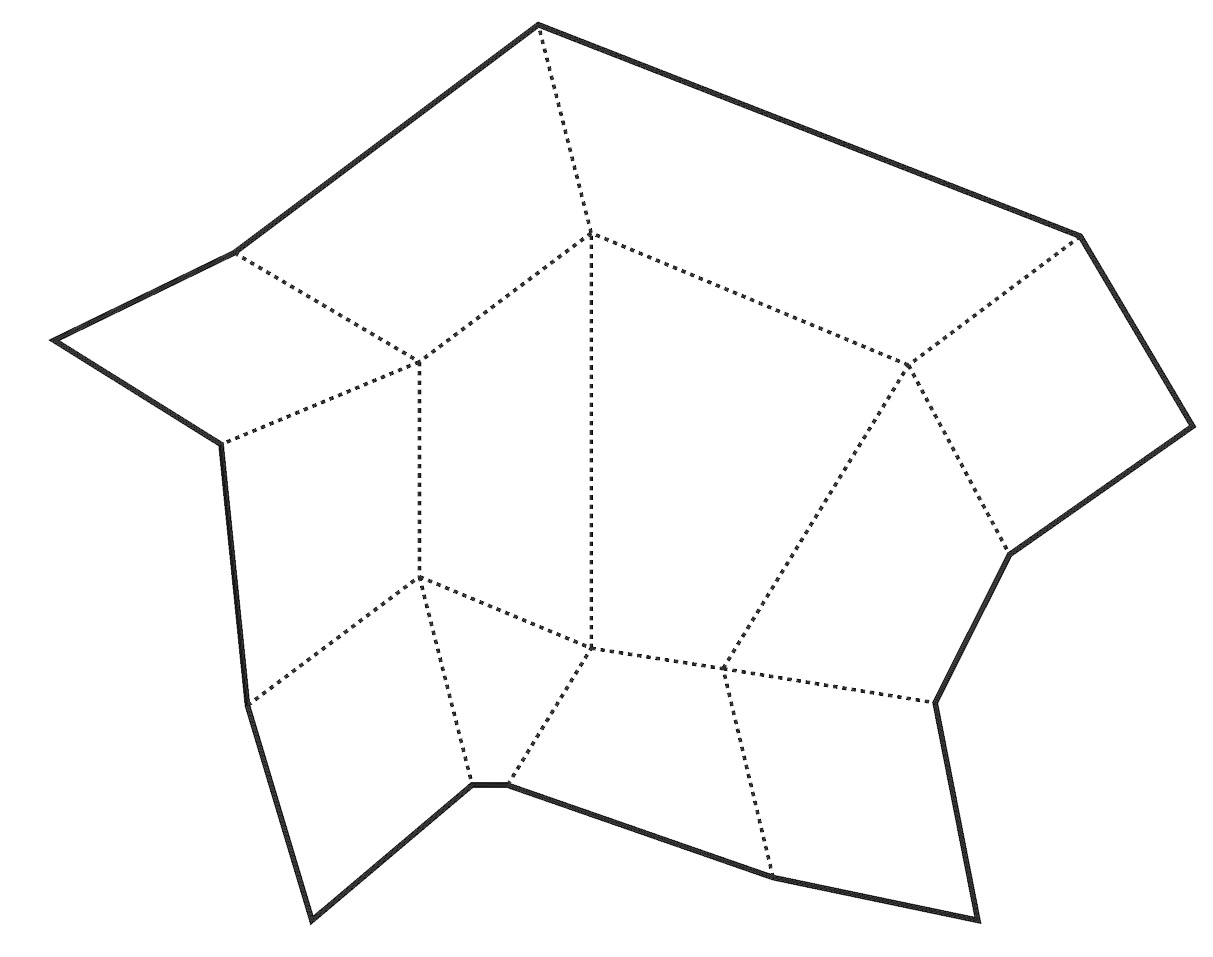}
            \includegraphics[width=0.32\textwidth]{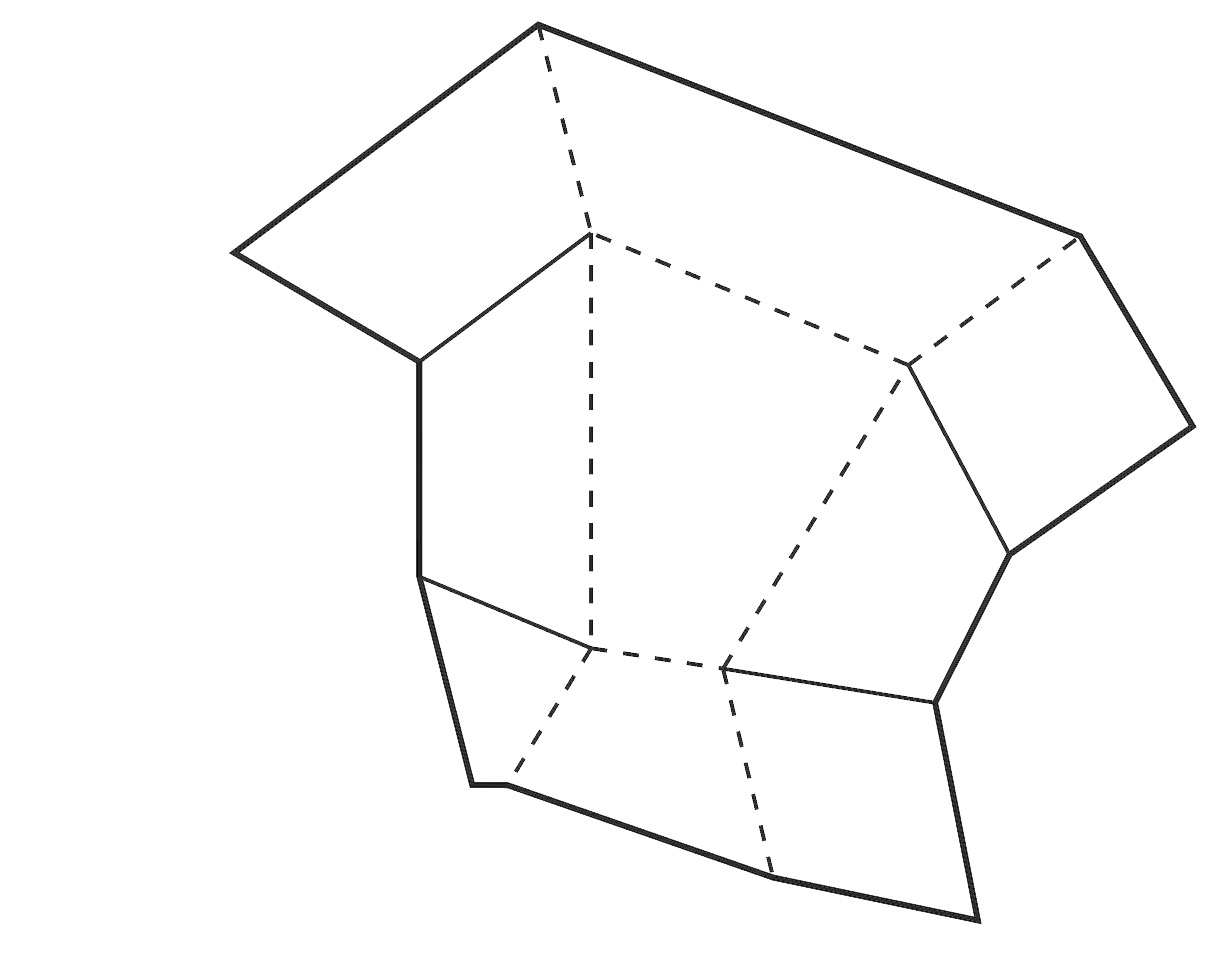}
      \includegraphics[width=0.7\textwidth]{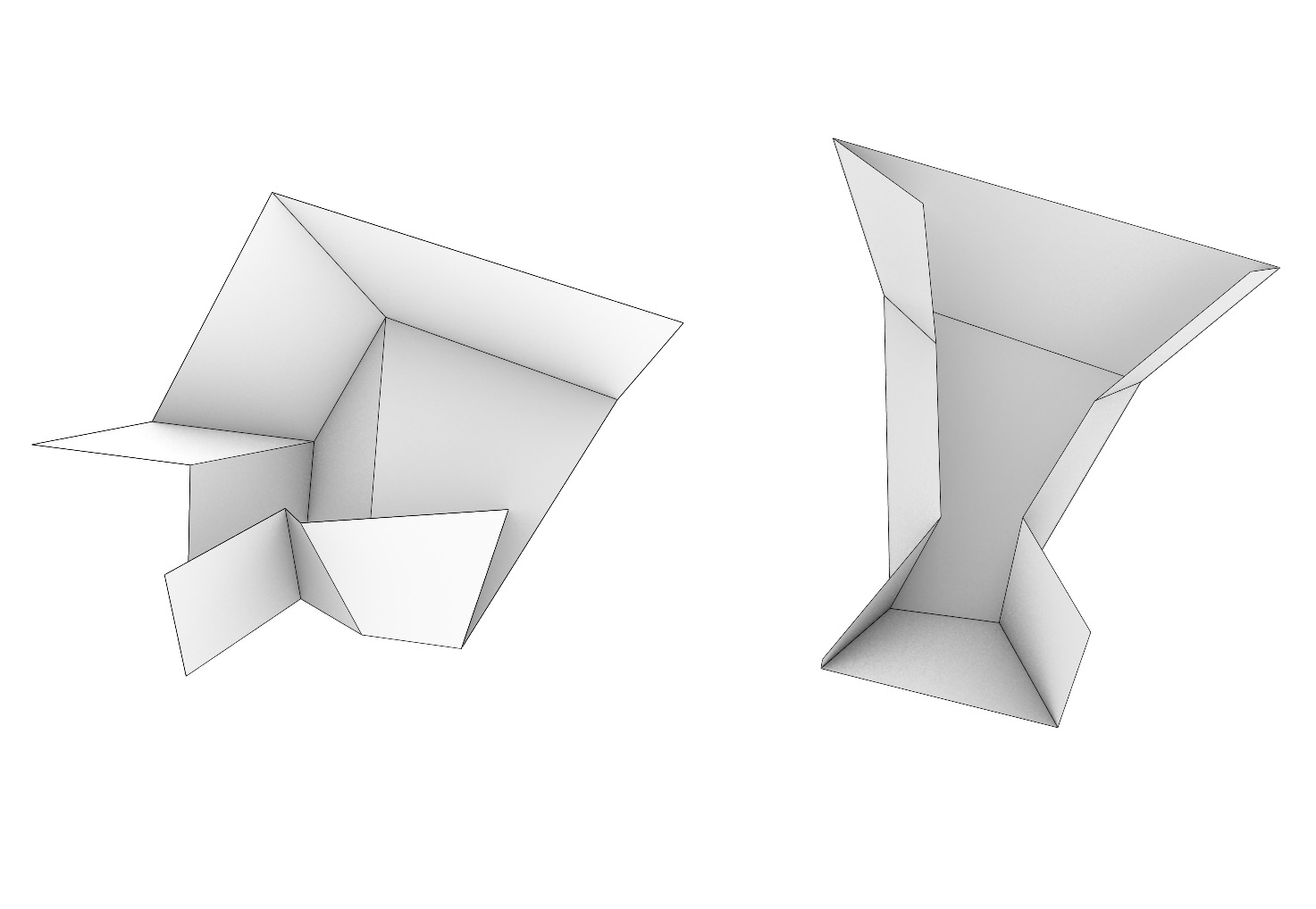}
\caption{Rigid planar $3 \times 4$ net (top middle) and deformations of its $3 \times 3$-subnets. In the top row left and right, dashed lines stand for valley folds, solid lines stand for mountain folds.}
    \label{fig:example-2}
\end{figure}

The angle values for the net from Figure \ref{fig:example-2} are given in Figure \ref{fig:Counterexample2}.
The number in each corner is the tangent of the half of the corresponding face angle.
For example, the angles at the bottom left vertex are
\[
2\arctan\frac32, \quad 2\arctan 2, \quad 2\arctan\frac23, \quad 2\arctan\frac12.
\]
Due to $\tan\frac{\pi - x}2 = \frac{1}{\tan\frac{x}2}$ reciprocal numbers correspond to complementary angles.
Thus the interior face on the left is a trapezoid, and the one on the right is a circular quadrilateral.
Besides, the opposite angles at every vertex complement each other to $\pi$.
% The fact that all half-tangents are rational implies that the complex can be realized with rational vertex coordinates.

\begin{figure}[ht]
\begin{center}
\begin{picture}(0,0)%
\includegraphics{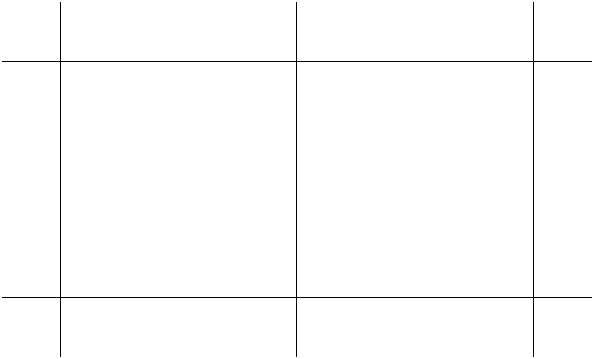}%
\end{picture}%
\setlength{\unitlength}{4144sp}%
\begin{picture}(4524,2724)(439,-2323)
\put(946,-2041){\makebox(0,0)[lb]{\smash{\fontsize{9}{10.8}\usefont{T1}{ptm}{m}{n}{\color[rgb]{0,0,0}$\frac12$}%
}}}
\put(766,-2041){\makebox(0,0)[lb]{\smash{\fontsize{9}{10.8}\usefont{T1}{ptm}{m}{n}{\color[rgb]{0,0,0}$\frac23$}%
}}}
\put(766,-1771){\makebox(0,0)[lb]{\smash{\fontsize{9}{10.8}\usefont{T1}{ptm}{m}{n}{\color[rgb]{0,0,0}$2$}%
}}}
\put(946,-1771){\makebox(0,0)[lb]{\smash{\fontsize{9}{10.8}\usefont{T1}{ptm}{m}{n}{\color[rgb]{0,0,0}$\frac32$}%
}}}
\put(2746,-2041){\makebox(0,0)[lb]{\smash{\fontsize{9}{10.8}\usefont{T1}{ptm}{m}{n}{\color[rgb]{0,0,0}$\frac32$}%
}}}
\put(2746,-1771){\makebox(0,0)[lb]{\smash{\fontsize{9}{10.8}\usefont{T1}{ptm}{m}{n}{\color[rgb]{0,0,0}$\frac76$}%
}}}
\put(2566,-2041){\makebox(0,0)[lb]{\smash{\fontsize{9}{10.8}\usefont{T1}{ptm}{m}{n}{\color[rgb]{0,0,0}$\frac67$}%
}}}
\put(2566,-1771){\makebox(0,0)[lb]{\smash{\fontsize{9}{10.8}\usefont{T1}{ptm}{m}{n}{\color[rgb]{0,0,0}$\frac23$}%
}}}
\put(766, 29){\makebox(0,0)[lb]{\smash{\fontsize{9}{10.8}\usefont{T1}{ptm}{m}{n}{\color[rgb]{0,0,0}$\frac12$}%
}}}
\put(766,-241){\makebox(0,0)[lb]{\smash{\fontsize{9}{10.8}\usefont{T1}{ptm}{m}{n}{\color[rgb]{0,0,0}$\frac23$}%
}}}
\put(946,-241){\makebox(0,0)[lb]{\smash{\fontsize{9}{10.8}\usefont{T1}{ptm}{m}{n}{\color[rgb]{0,0,0}$2$}%
}}}
\put(946, 29){\makebox(0,0)[lb]{\smash{\fontsize{9}{10.8}\usefont{T1}{ptm}{m}{n}{\color[rgb]{0,0,0}$\frac32$}%
}}}
\put(2566,-241){\makebox(0,0)[lb]{\smash{\fontsize{9}{10.8}\usefont{T1}{ptm}{m}{n}{\color[rgb]{0,0,0}$\frac12$}%
}}}
\put(2566, 29){\makebox(0,0)[lb]{\smash{\fontsize{9}{10.8}\usefont{T1}{ptm}{m}{n}{\color[rgb]{0,0,0}$\frac32$}%
}}}
\put(2746,-241){\makebox(0,0)[lb]{\smash{\fontsize{9}{10.8}\usefont{T1}{ptm}{m}{n}{\color[rgb]{0,0,0}$\frac23$}%
}}}
\put(2746, 29){\makebox(0,0)[lb]{\smash{\fontsize{9}{10.8}\usefont{T1}{ptm}{m}{n}{\color[rgb]{0,0,0}$2$}%
}}}
\put(4546,-2041){\makebox(0,0)[lb]{\smash{\fontsize{9}{10.8}\usefont{T1}{ptm}{m}{n}{\color[rgb]{0,0,0}$\frac23$}%
}}}
\put(4546,-1771){\makebox(0,0)[lb]{\smash{\fontsize{9}{10.8}\usefont{T1}{ptm}{m}{n}{\color[rgb]{0,0,0}$\frac{51}{76}$}%
}}}
\put(4366,-1771){\makebox(0,0)[lb]{\smash{\fontsize{9}{10.8}\usefont{T1}{ptm}{m}{n}{\color[rgb]{0,0,0}$\frac32$}%
}}}
\put(4366,-241){\makebox(0,0)[lb]{\smash{\fontsize{9}{10.8}\usefont{T1}{ptm}{m}{n}{\color[rgb]{0,0,0}$\frac67$}%
}}}
\put(4546,-241){\makebox(0,0)[lb]{\smash{\fontsize{9}{10.8}\usefont{T1}{ptm}{m}{n}{\color[rgb]{0,0,0}$\frac47$}%
}}}
\put(4546, 29){\makebox(0,0)[lb]{\smash{\fontsize{9}{10.8}\usefont{T1}{ptm}{m}{n}{\color[rgb]{0,0,0}$\frac76$}%
}}}
\put(4366, 29){\makebox(0,0)[lb]{\smash{\fontsize{9}{10.8}\usefont{T1}{ptm}{m}{n}{\color[rgb]{0,0,0}$\frac74$}%
}}}
\put(4321,-2041){\makebox(0,0)[lb]{\smash{\fontsize{9}{10.8}\usefont{T1}{ptm}{m}{n}{\color[rgb]{0,0,0}$\frac{76}{51}$}%
}}}
\end{picture}%
\end{center}
\caption{A planar non-flexible $3\times 4$-complex comprising two flexible $3 \times 3$-subcomplexes.}
\label{fig:Counterexample2}
\end{figure}

\begin{thm}
\label{thm:Counterexample2}
The planar discrete conjugate net depicted in Figure \ref{fig:Counterexample2} is not isometrically deformable while both of its $3 \times 3$-subnets are.
\end{thm}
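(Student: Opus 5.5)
We describe deformations of the flat net through the tangent images of its six interior vertices. These are arranged in two rows of three: the bottom row consists of vertices $v_1,v_2,v_3$ and the top row of $w_1,w_2,w_3$. Every interior vertex of the planar net has opposite face angles summing to $\pi$. So its tangent image is a spherical quadrilateral lying flat on a great circle and folded back onto itself. Such quadrilaterals (\emph{antideltoids}/folded ones in the classification of \cite{Izmestiev2017}) admit deformations out of the flat state, in which opposite dihedral angles are related by an explicit Bricard-type relation. In half-angle tangents this relation is linear: for the vertex with half-angle tangents $a,b,c,d$ one gets relations of the form $\tan\frac{\phi_+}{2} = \kappa\tan\frac{\phi_-}{2}$ and $\tan\frac{\psi_+}{2} = \mu\tan\frac{\phi_+}{2}$ (or with a cotangent, depending on the branch). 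The constants $\kappa,\mu$ are rational functions of the given half-angle tangents, and here they come out rational. The first step is therefore to compute, for each of the six vertices, the germ of the configuration space at the flat state. Since all dihedral angles are $0$ (or $\pi$), Theorem \ref{thm:NoFlatSubmanifold} does not apply directly. At a flat vertex of this type the configuration space consists of several smooth branches through the flat point, and on each branch the half-tangents of the four folding angles are proportional with explicit coefficients (together with the degenerate branches where two opposite angles stay $0$). I will list these branches, with their mountain/valley patterns, for the vertices in Figure \ref{fig:Counterexample2}.

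Next I check the flexibility of each $3\times 3$-subnet, which consists of four vertices arranged in a $2\times 2$ block. I propagate the half-tangent variables around the block: start from the edge between $v_1$ and $v_2$, say with parameter $t=\tan\frac{\phi}{2}$, transport it through the four vertices via the proportionality constants, and compare when returning. The product of the four coefficients around the square must equal $1$ for some choice of branches. This is the multiplicative monodromy condition, the analogue of $f_{14}\circ f_{43}\circ f_{32}\circ f_{21}=\mathrm{id}$ in the proof of Theorem \ref{thm:GlobFlex}, and here it is an exact identity in rational numbers. I will verify it for both the left and right subnets and exhibit the resulting deformations shown in Figure \ref{fig:example-2}. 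The numbers $\frac{51}{76}$ and $\frac{76}{51}$ were presumably chosen exactly so that the right square closes up.

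For rigidity of the whole $3\times 4$-net, I argue as follows. Any nontrivial deformation, restricted to each $3\times 3$-subnet, must be one of its flexions (or trivial there). The two subnets share the middle column of vertices $v_2,w_2$, and hence the interior edges at those vertices. Enumerating the finitely many branch combinations, I would show that every branch choice for the left square that closes up forces, on the shared edges, a ratio of half-tangents (or a pattern of which shared edges stay flat) that is incompatible with every closing branch choice for the right square. The only common solution is then all angles zero.

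I must also exclude deformations in which some subnet stays flat while the other folds. A fold confined to one subnet must still vanish on the shared edges, and I would check that this forces the whole subnet to stay flat, because the degenerate branches at a flat vertex fold only along a straight line through it, and no such line exists here. The main obstacle is this exhaustive case analysis over branches at flat vertices, including degenerate ones. I would organize it by the sign and mountain/valley compatibility at each vertex, and only then compare the rational ratios on the shared edges.
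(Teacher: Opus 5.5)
Your proposal is correct and is essentially the paper's argument. Lemma \ref{lem:ConfSpace} supplies exactly the two linear branches in half-angle tangents that you anticipate, with no degenerate branches because the smallest face angle at each vertex is unique. The paper's proof then checks the multiplicative closing condition around the central face of each $3\times 3$-subnet, finds a unique closing branch choice for each, and notes that the two choices select different branches at the shared vertices $v_2,w_2$.

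The only slip is terminological: quadrilaterals with $\alpha+\gamma=\beta+\delta=\pi$ are antiisograms rather than antideltoids, which does not affect your argument.
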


In order to prove this theorem, let us study the configuration spaces of the interior vertices of our net.
% Since the opposite angles at every vertex complement each other to $\pi$, the dependence between the dihedral angles especially simple.

\begin{lem}
\label{lem:ConfSpace}
Let the sides of the spherical quadrilateral in Figure \ref{fig:2x2TangentImage}, right, satisfy $\gamma = \pi - \alpha, \delta = \pi - \beta$.
Introduce the notations
\begin{gather*}
\tan\frac{\alpha}2 = a, \quad \tan\frac{\beta}2 = b, \quad \tan\frac{\gamma}2 = c, \quad
\tan\frac{\delta}2 = d,\\
\tan\frac{\phi_+}2 = z_+, \quad
\tan\frac{\psi_+}2 = w_+, \quad
\tan\frac{\phi_-}2 = z_-, \quad
\tan\frac{\psi_-}2 = w_-.
\end{gather*}
Further, assume that the smallest among the angles $\alpha, \beta, \gamma, \delta$ is unique, and this angle is $\alpha$.
Then the configuration space of the spherical quadrilateral consists of two components:
\[
z_+ = -t, \quad z_- = t, \quad
w_+ = \frac{1+ad}{1-ad} t, \quad w_- = \frac{1+ad}{1-ad}t
\]
and
\[
z_+ = \frac{1+ab}{1-ab}t, \quad z_- = \frac{1+ab}{1-ab}t, \quad
w_+ = -t, \quad w_- = t.
\]
\end{lem}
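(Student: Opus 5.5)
The plan is to combine the two diagonal relations, which under the hypothesis $\gamma=\pi-\alpha$, $\delta=\pi-\beta$ become remarkably simple, with one Bricard-type relation between a pair of adjacent dihedral angles, which should factor.

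\emph{Step 1: opposite angles agree up to sign.} Cut the quadrilateral by the diagonal $\lambda$ as in Figure~\ref{fig:CutTriangles}. Apply the spherical cosine law in the two triangles, remembering that the interior angle equals $\pi$ minus the exterior dihedral angle. This gives
\[
\cos\lambda=\cos\alpha\cos\delta+\sin\alpha\sin\delta\cos\phi_+=\cos\beta\cos\gamma+\sin\beta\sin\gamma\cos\phi_- .
\]
Under $\gamma=\pi-\alpha$, $\delta=\pi-\beta$ we have $\cos\alpha\cos\delta=\cos\beta\cos\gamma=-\cos\alpha\cos\beta$ and $\sin\alpha\sin\delta=\sin\beta\sin\gamma=\sin\alpha\sin\beta\neq 0$. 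Hence $\cos\phi_+=\cos\phi_-$, that is $z_-=\pm z_+$. The same argument with the other diagonal gives $w_-=\pm w_+$.

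\emph{Step 2: the relation between $z_+$ and $w_+$.} I would compute the relation between the two angles adjacent to the side $\alpha$ (namely $\phi_+$ and $\psi_+$). Fix the side $\alpha$ and express the positions of the two far vertices through $\phi_+$ and $\psi_+$. Then require that their spherical distance equal the side opposite $\alpha$, namely $\gamma$. After the half-tangent substitution this is Bricard's biquadratic relation
\[
c_{22}z_+^2w_+^2+c_{20}z_+^2+c_{02}w_+^2+2c_{11}z_+w_+ + c_{00}=0,
\]
with coefficients that are polynomials in $a,b,c,d$. Substituting $c=1/a$ and $d=1/b$, I expect this polynomial to factor into two bilinear factors of the form $(1-ad)w_+ \mp (1+ad) z_+$ and $(1-ab)z_+ \mp (1+ab)w_+$, possibly up to using $ab\cdot cd=1$. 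Checking this factorization is a finite computation, and I would verify it by expanding.

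\emph{Step 3: sign bookkeeping and completeness.} Each factor has to be combined with the sign choices from Step~1, and one must determine which sign of $z_-=\pm z_+$ goes with which factor. I would do this by testing a single explicit configuration on each branch, for instance the configuration where the quadrilateral is symmetric with respect to one diagonal. The relation between $\phi_-$ and $\psi_-$ then follows by the same computation applied at the opposite vertex, which should force $w_-=w_+$ on the first branch and $w_-=-w_+$ on the second. Parametrizing each branch by $t$ gives exactly the two families in the statement.

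The step I expect to be the main obstacle is showing that the configuration space is exactly these two components, with no extra components. The warning before Theorem~\ref{thm:NoFlatSubmanifold} says that the solution set of Bricard's equations can differ from the configuration space, so this needs an argument. Here the hypothesis that $\alpha$ is the unique smallest side is what I would use. It excludes the degenerate cases, such as $\alpha=\beta$ (deltoid-type) or $ab=1$ or $ad=1$, in which further components appear: folded configurations with $\lambda\in\{0,\pi\}$, or factors that coincide. It also guarantees that $1\pm ad$ and $1\pm ab$ are nonzero. To close this gap I would argue via the diagonal $\lambda$. The diagonal $\lambda$ ranges over an interval on which both triangles exist, and every value in that interval is realized on each branch. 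It then remains to check that the points with $\phi_\pm\in\{0,\pi\}$, where the local analysis of Theorem~\ref{thm:NoFlatSubmanifold} fails, lie on the closures of these two curves and do not start new curves. I would check this directly at $t=0$ and $t=\infty$.
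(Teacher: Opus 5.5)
Your route is correct in substance, and it differs genuinely from the paper's.

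\textbf{The paper's route.} The paper argues geometrically. Replacing the $\gamma\delta$-vertex by its antipode turns the quadrilateral into an isogram or anti-isogram with sides $\alpha,\beta$. In the anti-isogram case one further replaces the $\beta\gamma$-vertex and gets an isogram with sides $\alpha,\delta$. An antipodal replacement turns the half-tangents at the neighbouring vertices into their reciprocals, so the Napier-type identity of Lemma~\ref{lem:Napier} gives each component at once. The two components are thereby explained as the isogram and the anti-isogram branch. That proof is short and conceptual, but it imports an external formula and relies on the pictured shapes exhausting all configurations.

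\textbf{Your route.} Yours is elementary and self-contained. Step~1 is right; the missing minus sign in front of $\sin\alpha\sin\delta\cos\phi_+$ is harmless. The factorization you expect in Step~2 does hold. Fixing the side $\alpha$ and requiring the two far vertices to be at distance $\gamma$ gives
\[
\sin(\alpha+\beta)\,z_+^2+2\sin\beta\,z_+w_++\sin(\beta-\alpha)\,w_+^2=0 .
\]
This is a positive multiple of $\bigl((a+b)z_++(b-a)w_+\bigr)\bigl((1-ab)z_++(1+ab)w_+\bigr)$. The $z_+^2w_+^2$ and constant coefficients vanish, and $(a+b)z_++(b-a)w_+=b\bigl((1+ad)z_++(1-ad)w_+\bigr)$.

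\textbf{Two corrections to Step~3.} First, completeness is not an obstacle. The angles $\phi_+,\psi_+$ determine all four vertices, and the closing condition is a single equation. Hence the configuration space maps bijectively onto the zero set of the relation above in $(\RR\mathrm{P}^1)^2$. At $z_+=\infty$ the trigonometric form of the condition reduces, up to a nonzero factor, to $(1+\cos\psi_+)\sin(\alpha+\beta)=0$, which forces $w_+=\infty$. The caveat about Bricard's system concerns the opposite-angle relations, not this one. So the $\lambda$-argument can be dropped, and the hypothesis on $\alpha$ is used only to ensure $a\ne b$ and $ab\ne 1$.

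Second, the sign bookkeeping does not work as stated, for two reasons.
\begin{itemize}
\item A configuration symmetric with respect to a diagonal requires $\alpha=\beta$ or $\alpha=\delta$ (i.e.\ $\alpha+\beta=\pi$), and both are excluded. A single test point per branch would in any case also need an argument that the sign cannot jump at $t\in\{0,\infty\}$.
\item The $(\phi_-,\psi_-)$-relation is the $(z_+,w_+)$-relation with the sign of one variable flipped. It therefore fixes only the \emph{product} of the signs in $z_-=\pm z_+$, $w_-=\pm w_+$.
\end{itemize}
Use instead a mixed adjacent pair, for example $(\psi_+,\phi_-)$ at the side $\beta$. Its relation factors into $z_-=\frac{b-a}{a+b}\,w_+$ and $z_-=-\frac{1+ab}{1-ab}\,w_+$. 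Comparing with $z_-=\pm z_+$ forces $z_-=-z_+$ on the first branch and $z_-=z_+$ on the second, at every point with $w_+\notin\{0,\infty\}$, with no continuity argument needed. Your opposite-vertex relation then yields $w_-=w_+$ on the first branch and $w_-=-w_+$ on the second.
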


\begin{proof}
Replace the common vertex of the edges $\gamma$ and $\delta$ by its antipode.
The edges become replaced by their complements $\pi-\gamma = \alpha$ and $\pi-\delta = \beta$, and the quadrilateral turns into an isogram (a non-self-intersecting quadrilateral with equal pairs of opposite sides) or an anti-isogram (a self-intersecting quadrilateral with equal pairs of opposite sides).
The first case is pictured in Figure \ref{fig:Parallelograms}, right.
In the second case one obtains an isogram if the $\beta\gamma$-vertex is replaced by its antipode, Figure \ref{fig:Parallelograms}, left.
Figure \ref{fig:Parallelograms} describes all possible configurations up to orientation change: instead of three positive and one negative exterior angle there can be three negative and one positive one.

Now Lemma follows from the Lemma \ref{lem:Napier} below.

\begin{figure}[ht]
\begin{center}
\begin{picture}(0,0)%
\includegraphics{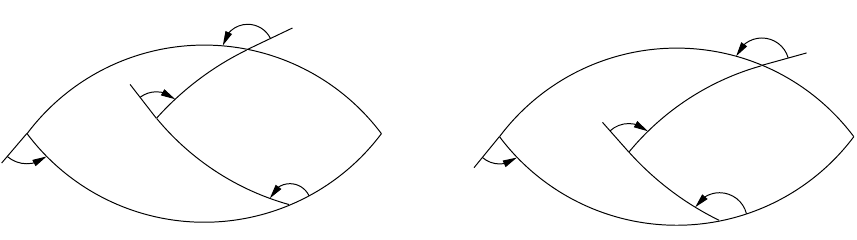}%
\end{picture}%
\setlength{\unitlength}{4144sp}%
\begin{picture}(6514,1819)(-205,709)
\put(561,2088){\makebox(0,0)[lb]{\smash{\fontsize{9}{10.8}\usefont{T1}{ptm}{m}{n}{\color[rgb]{0,0,0}$\beta$}%
}}}
\put(1437,1245){\makebox(0,0)[lb]{\smash{\fontsize{9}{10.8}\usefont{T1}{ptm}{m}{n}{\color[rgb]{0,0,0}$\delta$}%
}}}
\put(2129,1837){\makebox(0,0)[lb]{\smash{\fontsize{9}{10.8}\usefont{T1}{ptm}{m}{n}{\color[rgb]{0,0,0}$\delta$}%
}}}
\put(884,1900){\makebox(0,0)[lb]{\smash{\fontsize{9}{10.8}\usefont{T1}{ptm}{m}{n}{\color[rgb]{0,0,0}$\phi_+$}%
}}}
\put(704,809){\makebox(0,0)[lb]{\smash{\fontsize{9}{10.8}\usefont{T1}{ptm}{m}{n}{\color[rgb]{0,0,0}$\gamma$}%
}}}
\put(-149,1200){\makebox(0,0)[lb]{\smash{\fontsize{9}{10.8}\usefont{T1}{ptm}{m}{n}{\color[rgb]{0,0,0}$\phi_-$}%
}}}
\put(1934,1182){\makebox(0,0)[lb]{\smash{\fontsize{9}{10.8}\usefont{T1}{ptm}{m}{n}{\color[rgb]{0,0,0}$\psi_-$}%
}}}
\put(1626,2400){\makebox(0,0)[lb]{\smash{\fontsize{9}{10.8}\usefont{T1}{ptm}{m}{n}{\color[rgb]{0,0,0}$\psi_+$}%
}}}
\put(2283,1248){\makebox(0,0)[lb]{\smash{\fontsize{9}{10.8}\usefont{T1}{ptm}{m}{n}{\color[rgb]{0,0,0}$\alpha$}%
}}}
\put(1303,1820){\makebox(0,0)[lb]{\smash{\fontsize{9}{10.8}\usefont{T1}{ptm}{m}{n}{\color[rgb]{0,0,0}$\alpha$}%
}}}
\put(4242,2119){\rotatebox{360.0}{\makebox(0,0)[lb]{\smash{\fontsize{9}{10.8}\usefont{T1}{ptm}{m}{n}{\color[rgb]{0,0,0}$\gamma$}%
}}}}
\put(4256,761){\rotatebox{360.0}{\makebox(0,0)[lb]{\smash{\fontsize{9}{10.8}\usefont{T1}{ptm}{m}{n}{\color[rgb]{0,0,0}$\delta$}%
}}}}
\put(3466,1188){\rotatebox{360.0}{\makebox(0,0)[lb]{\smash{\fontsize{9}{10.8}\usefont{T1}{ptm}{m}{n}{\color[rgb]{0,0,0}$\psi_-$}%
}}}}
\put(4436,1636){\rotatebox{360.0}{\makebox(0,0)[lb]{\smash{\fontsize{9}{10.8}\usefont{T1}{ptm}{m}{n}{\color[rgb]{0,0,0}$\psi_+$}%
}}}}
\put(5521,2274){\rotatebox{360.0}{\makebox(0,0)[lb]{\smash{\fontsize{9}{10.8}\usefont{T1}{ptm}{m}{n}{\color[rgb]{0,0,0}$\phi_-$}%
}}}}
\put(4855,1163){\rotatebox{360.0}{\makebox(0,0)[lb]{\smash{\fontsize{9}{10.8}\usefont{T1}{ptm}{m}{n}{\color[rgb]{0,0,0}$\alpha$}%
}}}}
\put(5288,1098){\rotatebox{360.0}{\makebox(0,0)[lb]{\smash{\fontsize{9}{10.8}\usefont{T1}{ptm}{m}{n}{\color[rgb]{0,0,0}$\phi_+$}%
}}}}
\put(5743,1083){\makebox(0,0)[lb]{\smash{\fontsize{9}{10.8}\usefont{T1}{ptm}{m}{n}{\color[rgb]{0,0,0}$\beta$}%
}}}
\put(4996,1630){\rotatebox{360.0}{\makebox(0,0)[lb]{\smash{\fontsize{9}{10.8}\usefont{T1}{ptm}{m}{n}{\color[rgb]{0,0,0}$\beta$}%
}}}}
\put(5900,1668){\rotatebox{360.0}{\makebox(0,0)[lb]{\smash{\fontsize{9}{10.8}\usefont{T1}{ptm}{m}{n}{\color[rgb]{0,0,0}$\alpha$}%
}}}}
\end{picture}%
\end{center}
\caption{To the proof of Lemma \ref{lem:ConfSpace}.}
\label{fig:Parallelograms}
\end{figure}
\end{proof}

\begin{lem}
\label{lem:Napier}
In every spherical isogram with side lengths $\alpha, \beta$ and interior angles $\phi, \psi$ the following equation holds:
\begin{equation}
\label{eqn:Napier}
\tan\frac{\phi}2 \tan\frac{\psi}2 =
\frac{1 + \tan\frac{\alpha}{2}\tan\frac{\beta}{2}}{1 - \tan\frac{\alpha}{2}\tan\frac{\beta}{2}}.
\end{equation}
Vice versa, for any $\alpha, \beta \in (0,\pi)$ and $\phi, \psi \in (0,2\pi)$ satisfying the above equation there is an isogram with side lengths $a,b$ and angles $\gamma, \delta$.
\end{lem}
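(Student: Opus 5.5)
The plan is to cut the isogram by a diagonal into two congruent triangles and then combine the spherical cosine law with half-angle identities. Let the isogram have vertices $A,B,C,D$ with $AB = CD = \alpha$ and $BC = DA = \beta$. Let $\phi$ be the interior angle at $B$ and $\psi$ the interior angle at $A$.

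First step: show that an isogram (non-self-intersecting, opposite sides equal) has a point symmetry. That is, the rotation by $\pi$ about the midpoint of a diagonal swaps $A \leftrightarrow C$ and $B \leftrightarrow D$. This follows because triangles $ABC$ and $CDA$ share $AC$ and have the same side lengths, so they are congruent. Non-self-intersection forces the orientation-reversing alternative to be excluded. Consequently the angles at $A$ and $C$ are both $\psi$, and the angles at $B$ and $D$ are both $\phi$.

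Second step: derive the equation from the angle sum. Let $\eta$ and $\theta$ be the angles of triangle $ABC$ at $A$ and at $C$. By the congruence, the angle of triangle $ACD$ at $A$ equals $\theta$. Hence $\psi = \eta + \theta$, the sum of two angles of the triangle with sides $\alpha$, $\beta$ and included angle $\phi$ at $B$. The classical Napier analogy gives
\[
\tan\frac{\eta+\theta}{2} = \frac{\cos\frac{\alpha-\beta}{2}}{\cos\frac{\alpha+\beta}{2}}\cot\frac{\phi}{2}.
\]
Writing $\cos\frac{\alpha\mp\beta}{2} = \cos\frac{\alpha}{2}\cos\frac{\beta}{2}\,\bigl(1 \pm \tan\frac{\alpha}{2}\tan\frac{\beta}{2}\bigr)$ turns this into \eqref{eqn:Napier} directly. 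If one prefers to stay within the paper's tools, the same identity can be obtained from the spherical cosine law together with Lemma \ref{lem:SphTrig}. In that route, $\psi$ and $\phi$ are viewed as functions of the diagonal $\lambda$ and one checks that $\tan\frac{\phi}{2}\tan\frac{\psi}{2}$ has zero derivative. This is more cumbersome, so the Napier analogy is preferable. I will also need to handle the convention issue when $\phi$ or $\psi$ exceeds $\pi$, i.e.\ reflex interior angles in the range $(0,2\pi)$. For these I will use the reflection $\phi \mapsto 2\pi - \phi$ on the triangle, which maps $\tan\frac{\phi}{2}$ to $-\tan\frac{\phi}{2}$ and does the same for $\psi$, so the product is unchanged.

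Converse: given $\alpha,\beta,\phi$, build the triangle $ABC$ with sides $\alpha,\beta$ enclosing the angle $\phi$, and reflect it through the midpoint of $AC$ to get $D$. The resulting quadrilateral has opposite sides $\alpha,\beta$, and by the forward direction its other angle $\psi'$ satisfies the equation. Since $\tan\frac{\psi}{2}$ determines $\psi \in (0,2\pi)$ uniquely, we get $\psi' = \psi$. The statement's ``side lengths $a,b$ and angles $\gamma,\delta$'' I read as a typo for $\alpha,\beta,\phi,\psi$.

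Main obstacle: degenerate and boundary cases. These are $\phi = \pi$, where the triangle degenerates; $\alpha+\beta=\pi$, where the right-hand side of \eqref{eqn:Napier} is infinite; and the need to confirm that the constructed quadrilateral is indeed non-self-intersecting. I would settle these by continuity in $\phi$, and by checking non-self-intersection in the converse via the point-symmetric construction: $D$ lies on the opposite side of the great circle $AC$ from $B$.
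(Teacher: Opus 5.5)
Your proof is correct. The paper gives no argument of its own for this lemma. It only notes that the formula appears without proof in Graf--Sauer and refers to \cite{Izmestiev2026}, so there is nothing in the text to compare step by step.

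Your route is a complete, self-contained proof, and presumably the one the lemma's name alludes to. It runs as follows:
\begin{itemize}
\item point symmetry of the isogram, with the mirror alternative excluded because the broken lines $ABC$ and $CDA$ would then cross on the perpendicular bisector of $AC$;
\item splitting along $AC$, so that $\psi=\eta+\theta$ when $\phi<\pi$;
\item Napier's analogy;
\item passing to the complementary region when $\phi>\pi$, which replaces $(\phi,\psi)$ by $(2\pi-\phi,2\pi-\psi)$ and leaves $\tan\frac{\phi}{2}\tan\frac{\psi}{2}$ unchanged.
\end{itemize}

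Two small points on the boundary cases. First, write Napier's analogy as $\tan\frac{\psi}{2}\cos\frac{\alpha+\beta}{2}=\cos\frac{\alpha-\beta}{2}\cot\frac{\phi}{2}$. This settles $\alpha+\beta=\pi$ directly, since it forces $\psi=\pi$ whenever $\phi\ne\pi$, so no continuity argument is needed there.

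Second, $\phi=\pi$ occurs for a genuine isogram only when $\alpha+\beta=\pi$. For $\alpha+\beta\ne\pi$ the quadrilateral collapses to a doubly covered arc. For $\alpha+\beta=\pi$, the vertices $A$ and $C$ are antipodal and one gets lunes with arbitrary $\psi$. For $\psi\ne\pi$ these lunes are not limits of isograms with $\phi\ne\pi$, because those all have $\psi=\pi$. So ``continuity in $\phi$'' does not reach them. The equation still holds there trivially in the projective sense ($\infty=\infty$), and the converse is realized by the lune directly.

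In the converse, for $\phi>\pi$ build the triangle with angle $2\pi-\phi$ at $B$ and take the complementary region, exactly as in the forward direction.
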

The formula of Lemma \ref{lem:Napier} is contained in \cite[\S 22]{GrafSauer1931} without proof and with the right hand side in a different form.
For a proof see \cite[Theorem 3.19 and Remark 3.20]{Izmestiev2026}.

Figure \ref{fig:Parametrization} represents the two components of the configuration space described in Lemma \ref{lem:ConfSpace} in a schematic way.
Instead of the angles (both planar and dihedral) we are writing tangents of their halves.
The dashed and the full lines correspond to the opposite signs of the dihedral angles.
Thus, dashed can stand for valleys and full can stand for mountains, or vice versa.

\begin{figure}[ht]
\begin{center}
\begin{picture}(0,0)%
\includegraphics{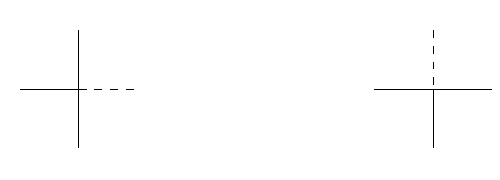}%
\end{picture}%
\setlength{\unitlength}{4144sp}%
\begin{picture}(3810,1352)(301,-747)
\put(3556,-646){\makebox(0,0)[lb]{\smash{\fontsize{9}{10.8}\usefont{T1}{ptm}{m}{n}{\color[rgb]{0,0,0}$t$}%
}}}
\put(3511,434){\makebox(0,0)[lb]{\smash{\fontsize{9}{10.8}\usefont{T1}{ptm}{m}{n}{\color[rgb]{0,0,0}$-t$}%
}}}
\put(316,-106){\makebox(0,0)[lb]{\smash{\fontsize{9}{10.8}\usefont{T1}{ptm}{m}{n}{\color[rgb]{0,0,0}$t$}%
}}}
\put(766,479){\makebox(0,0)[lb]{\smash{\fontsize{9}{10.8}\usefont{T1}{ptm}{m}{n}{\color[rgb]{0,0,0}$\frac{1+ad}{1-ad}t$}%
}}}
\put(721,-691){\makebox(0,0)[lb]{\smash{\fontsize{9}{10.8}\usefont{T1}{ptm}{m}{n}{\color[rgb]{0,0,0}$\frac{1+ad}{1-ad}t$}%
}}}
\put(1351,-106){\makebox(0,0)[lb]{\smash{\fontsize{9}{10.8}\usefont{T1}{ptm}{m}{n}{\color[rgb]{0,0,0}$(-t)$}%
}}}
\put(2746,-106){\makebox(0,0)[lb]{\smash{\fontsize{9}{10.8}\usefont{T1}{ptm}{m}{n}{\color[rgb]{0,0,0}$\frac{1+ab}{1-ab}t$}%
}}}
\put(4096,-106){\makebox(0,0)[lb]{\smash{\fontsize{9}{10.8}\usefont{T1}{ptm}{m}{n}{\color[rgb]{0,0,0}$\frac{1+ab}{1-ab}t$}%
}}}
\put(766, 29){\makebox(0,0)[lb]{\smash{\fontsize{9}{10.8}\usefont{T1}{ptm}{m}{n}{\color[rgb]{0,0,0}$b$}%
}}}
\put(766,-196){\makebox(0,0)[lb]{\smash{\fontsize{9}{10.8}\usefont{T1}{ptm}{m}{n}{\color[rgb]{0,0,0}$c$}%
}}}
\put(991, 29){\makebox(0,0)[lb]{\smash{\fontsize{9}{10.8}\usefont{T1}{ptm}{m}{n}{\color[rgb]{0,0,0}$a$}%
}}}
\put(991,-196){\makebox(0,0)[lb]{\smash{\fontsize{9}{10.8}\usefont{T1}{ptm}{m}{n}{\color[rgb]{0,0,0}$d$}%
}}}
\put(3466, 29){\makebox(0,0)[lb]{\smash{\fontsize{9}{10.8}\usefont{T1}{ptm}{m}{n}{\color[rgb]{0,0,0}$b$}%
}}}
\put(3466,-196){\makebox(0,0)[lb]{\smash{\fontsize{9}{10.8}\usefont{T1}{ptm}{m}{n}{\color[rgb]{0,0,0}$c$}%
}}}
\put(3691, 29){\makebox(0,0)[lb]{\smash{\fontsize{9}{10.8}\usefont{T1}{ptm}{m}{n}{\color[rgb]{0,0,0}$a$}%
}}}
\put(3691,-196){\makebox(0,0)[lb]{\smash{\fontsize{9}{10.8}\usefont{T1}{ptm}{m}{n}{\color[rgb]{0,0,0}$d$}%
}}}
\end{picture}%
\end{center}
\caption{Parametrization of the configuration space of a spherical quadrilateral with $\alpha + \gamma = \beta + \delta = \pi$.}
\label{fig:Parametrization}
\end{figure}

We are now in a position to prove Theorem \ref{thm:Counterexample2}.

\begin{proof}[Proof of Theorem \ref{thm:Counterexample2}]
The isometric deformations of the left and of the right $3 \times 3$-subnets are shown in Figure \ref{fig:Deformations}.
The full and the dashed edges represent mountains and valleys, and the numbers at the edges are the tangents of half the dihedral angles.
These are in accordance with Lemma \ref{lem:ConfSpace}, up to a parameter scaling at every vertex, see also Figure \ref{fig:Parametrization}.

\begin{figure}[ht]
\begin{center}
\begin{picture}(0,0)%
\includegraphics{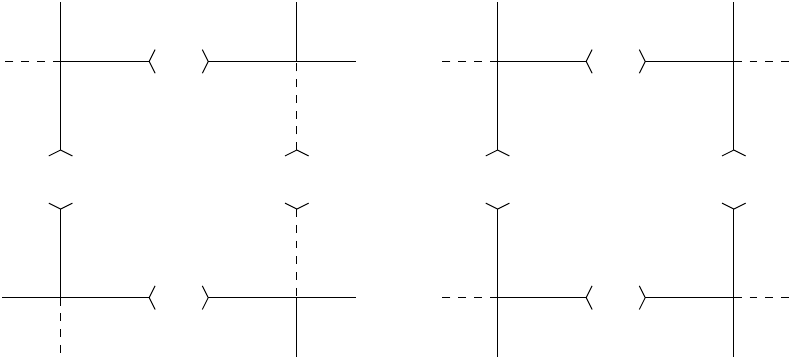}%
\end{picture}%
\setlength{\unitlength}{4144sp}%
\begin{picture}(6054,2724)(439,-2323)
\put(946,-2041){\makebox(0,0)[lb]{\smash{\fontsize{9}{10.8}\usefont{T1}{ptm}{m}{n}{\color[rgb]{0,0,0}$\frac12$}%
}}}
\put(766,-2041){\makebox(0,0)[lb]{\smash{\fontsize{9}{10.8}\usefont{T1}{ptm}{m}{n}{\color[rgb]{0,0,0}$\frac23$}%
}}}
\put(766,-1771){\makebox(0,0)[lb]{\smash{\fontsize{9}{10.8}\usefont{T1}{ptm}{m}{n}{\color[rgb]{0,0,0}$2$}%
}}}
\put(946,-1771){\makebox(0,0)[lb]{\smash{\fontsize{9}{10.8}\usefont{T1}{ptm}{m}{n}{\color[rgb]{0,0,0}$\frac32$}%
}}}
\put(2746,-2041){\makebox(0,0)[lb]{\smash{\fontsize{9}{10.8}\usefont{T1}{ptm}{m}{n}{\color[rgb]{0,0,0}$\frac32$}%
}}}
\put(2746,-1771){\makebox(0,0)[lb]{\smash{\fontsize{9}{10.8}\usefont{T1}{ptm}{m}{n}{\color[rgb]{0,0,0}$\frac76$}%
}}}
\put(2566,-2041){\makebox(0,0)[lb]{\smash{\fontsize{9}{10.8}\usefont{T1}{ptm}{m}{n}{\color[rgb]{0,0,0}$\frac67$}%
}}}
\put(2566,-1771){\makebox(0,0)[lb]{\smash{\fontsize{9}{10.8}\usefont{T1}{ptm}{m}{n}{\color[rgb]{0,0,0}$\frac23$}%
}}}
\put(766, 29){\makebox(0,0)[lb]{\smash{\fontsize{9}{10.8}\usefont{T1}{ptm}{m}{n}{\color[rgb]{0,0,0}$\frac12$}%
}}}
\put(766,-241){\makebox(0,0)[lb]{\smash{\fontsize{9}{10.8}\usefont{T1}{ptm}{m}{n}{\color[rgb]{0,0,0}$\frac23$}%
}}}
\put(946,-241){\makebox(0,0)[lb]{\smash{\fontsize{9}{10.8}\usefont{T1}{ptm}{m}{n}{\color[rgb]{0,0,0}$2$}%
}}}
\put(946, 29){\makebox(0,0)[lb]{\smash{\fontsize{9}{10.8}\usefont{T1}{ptm}{m}{n}{\color[rgb]{0,0,0}$\frac32$}%
}}}
\put(2566,-241){\makebox(0,0)[lb]{\smash{\fontsize{9}{10.8}\usefont{T1}{ptm}{m}{n}{\color[rgb]{0,0,0}$\frac12$}%
}}}
\put(2566, 29){\makebox(0,0)[lb]{\smash{\fontsize{9}{10.8}\usefont{T1}{ptm}{m}{n}{\color[rgb]{0,0,0}$\frac32$}%
}}}
\put(2746,-241){\makebox(0,0)[lb]{\smash{\fontsize{9}{10.8}\usefont{T1}{ptm}{m}{n}{\color[rgb]{0,0,0}$\frac23$}%
}}}
\put(2746, 29){\makebox(0,0)[lb]{\smash{\fontsize{9}{10.8}\usefont{T1}{ptm}{m}{n}{\color[rgb]{0,0,0}$2$}%
}}}
\put(1756,-106){\makebox(0,0)[lb]{\smash{\fontsize{9}{10.8}\usefont{T1}{ptm}{m}{n}{\color[rgb]{0,0,0}$2t$}%
}}}
\put(1756,-1906){\makebox(0,0)[lb]{\smash{\fontsize{9}{10.8}\usefont{T1}{ptm}{m}{n}{\color[rgb]{0,0,0}$8t$}%
}}}
\put(811,-1006){\makebox(0,0)[lb]{\smash{\fontsize{9}{10.8}\usefont{T1}{ptm}{m}{n}{\color[rgb]{0,0,0}$4t$}%
}}}
\put(2611,-1006){\makebox(0,0)[lb]{\smash{\fontsize{9}{10.8}\usefont{T1}{ptm}{m}{n}{\color[rgb]{0,0,0}$-t$}%
}}}
\put(4276,-2041){\makebox(0,0)[lb]{\smash{\fontsize{9}{10.8}\usefont{T1}{ptm}{m}{n}{\color[rgb]{0,0,0}$\frac32$}%
}}}
\put(4096,-2041){\makebox(0,0)[lb]{\smash{\fontsize{9}{10.8}\usefont{T1}{ptm}{m}{n}{\color[rgb]{0,0,0}$\frac67$}%
}}}
\put(4096,-1771){\makebox(0,0)[lb]{\smash{\fontsize{9}{10.8}\usefont{T1}{ptm}{m}{n}{\color[rgb]{0,0,0}$\frac23$}%
}}}
\put(4276,-1771){\makebox(0,0)[lb]{\smash{\fontsize{9}{10.8}\usefont{T1}{ptm}{m}{n}{\color[rgb]{0,0,0}$\frac76$}%
}}}
\put(6076,-2041){\makebox(0,0)[lb]{\smash{\fontsize{9}{10.8}\usefont{T1}{ptm}{m}{n}{\color[rgb]{0,0,0}$\frac23$}%
}}}
\put(6076,-1771){\makebox(0,0)[lb]{\smash{\fontsize{9}{10.8}\usefont{T1}{ptm}{m}{n}{\color[rgb]{0,0,0}$\frac{51}{76}$}%
}}}
\put(5896,-1771){\makebox(0,0)[lb]{\smash{\fontsize{9}{10.8}\usefont{T1}{ptm}{m}{n}{\color[rgb]{0,0,0}$\frac32$}%
}}}
\put(4096, 29){\makebox(0,0)[lb]{\smash{\fontsize{9}{10.8}\usefont{T1}{ptm}{m}{n}{\color[rgb]{0,0,0}$\frac32$}%
}}}
\put(4276,-241){\makebox(0,0)[lb]{\smash{\fontsize{9}{10.8}\usefont{T1}{ptm}{m}{n}{\color[rgb]{0,0,0}$\frac23$}%
}}}
\put(4276, 29){\makebox(0,0)[lb]{\smash{\fontsize{9}{10.8}\usefont{T1}{ptm}{m}{n}{\color[rgb]{0,0,0}$2$}%
}}}
\put(5896,-241){\makebox(0,0)[lb]{\smash{\fontsize{9}{10.8}\usefont{T1}{ptm}{m}{n}{\color[rgb]{0,0,0}$\frac67$}%
}}}
\put(6076,-241){\makebox(0,0)[lb]{\smash{\fontsize{9}{10.8}\usefont{T1}{ptm}{m}{n}{\color[rgb]{0,0,0}$\frac47$}%
}}}
\put(6076, 29){\makebox(0,0)[lb]{\smash{\fontsize{9}{10.8}\usefont{T1}{ptm}{m}{n}{\color[rgb]{0,0,0}$\frac76$}%
}}}
\put(5086,-106){\makebox(0,0)[lb]{\smash{\fontsize{9}{10.8}\usefont{T1}{ptm}{m}{n}{\color[rgb]{0,0,0}$\frac{1}{7}t$}%
}}}
\put(5941,-1006){\makebox(0,0)[lb]{\smash{\fontsize{9}{10.8}\usefont{T1}{ptm}{m}{n}{\color[rgb]{0,0,0}$\frac{5}{7}t$}%
}}}
\put(4096,-241){\makebox(0,0)[lb]{\smash{\fontsize{9}{10.8}\usefont{T1}{ptm}{m}{n}{\color[rgb]{0,0,0}$\frac12$}%
}}}
\put(4186,-1006){\makebox(0,0)[lb]{\smash{\fontsize{9}{10.8}\usefont{T1}{ptm}{m}{n}{\color[rgb]{0,0,0}$t$}%
}}}
\put(5896, 29){\makebox(0,0)[lb]{\smash{\fontsize{9}{10.8}\usefont{T1}{ptm}{m}{n}{\color[rgb]{0,0,0}$\frac74$}%
}}}
\put(5041,-1906){\makebox(0,0)[lb]{\smash{\fontsize{9}{10.8}\usefont{T1}{ptm}{m}{n}{\color[rgb]{0,0,0}$\frac{3}{11}t$}%
}}}
\put(5851,-2041){\makebox(0,0)[lb]{\smash{\fontsize{9}{10.8}\usefont{T1}{ptm}{m}{n}{\color[rgb]{0,0,0}$\frac{76}{51}$}%
}}}
\end{picture}%
\end{center}
\caption{Isometric deformations of the left and the right subcomplexes.}
\label{fig:Deformations}
\end{figure}

The deformations in Figure \ref{fig:Deformations} do not agree at the common pair of vertices, even after any parameter change.
Thus they do not fit together to result in an isometric deformation of the $3 \times 4$-net.
In order to prove the rigidity of the latter, it suffices to show that the deformations of the $3 \times 3$-subnets presented in Figure \ref{fig:Deformations} are unique, up to parameter change.
In other words, one wants to show that there is no other choice of components of the configuration spaces at the vertices that would fit together to form a deformation of a $3 \times 3$-subnet.
For this, we write down all scaling factors between the tangents of the halves of the dihedral angles, see Figure \ref{fig:Combinations}.
The products of the scaling factors when going from one edge to its opposite in two different ways must be equal.
Only one combination for each of the two $3\times 3$-complexes satisfies this condition; these factors in Figure \ref{fig:Combinations} are boxed.
This confirms the uniqueness of the deformations shown in Figure \ref{fig:Deformations}, and the theorem is proved.
\end{proof}

\begin{figure}[ht]
\begin{center}
\begin{picture}(0,0)%
\includegraphics{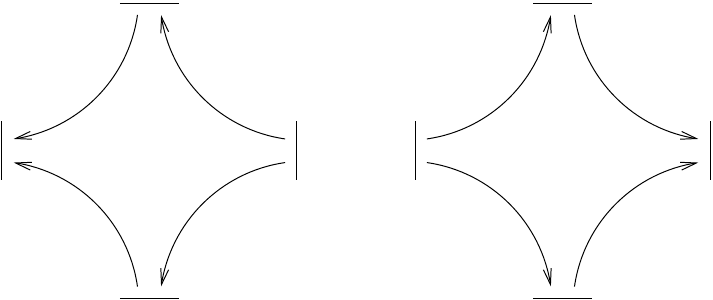}%
\end{picture}%
\setlength{\unitlength}{4144sp}%
\begin{picture}(5424,2274)(-11,-1423)
\put( 91,344){\makebox(0,0)[lb]{\smash{\fontsize{9}{10.8}\usefont{T1}{ptm}{m}{n}{\color[rgb]{0,0,0}$\boxed{2} \text{ or } \frac17$}%
}}}
\put( 91,-916){\makebox(0,0)[lb]{\smash{\fontsize{9}{10.8}\usefont{T1}{ptm}{m}{n}{\color[rgb]{0,0,0}$7 \text{ or } \boxed{\frac12}$}%
}}}
\put(1576,344){\makebox(0,0)[lb]{\smash{\fontsize{9}{10.8}\usefont{T1}{ptm}{m}{n}{\color[rgb]{0,0,0}$\boxed{2} \text{ or } \frac17$}%
}}}
\put(1576,-916){\makebox(0,0)[lb]{\smash{\fontsize{9}{10.8}\usefont{T1}{ptm}{m}{n}{\color[rgb]{0,0,0}$\boxed{8} \text{ or } \frac{3}{11}$}%
}}}
\put(3196,344){\makebox(0,0)[lb]{\smash{\fontsize{9}{10.8}\usefont{T1}{ptm}{m}{n}{\color[rgb]{0,0,0}$2 \text{ or } \boxed{\frac17}$}%
}}}
\put(3196,-916){\makebox(0,0)[lb]{\smash{\fontsize{9}{10.8}\usefont{T1}{ptm}{m}{n}{\color[rgb]{0,0,0}$8 \text{ or } \boxed{\frac{3}{11}}$}%
}}}
\put(4726,344){\makebox(0,0)[lb]{\smash{\fontsize{9}{10.8}\usefont{T1}{ptm}{m}{n}{\color[rgb]{0,0,0}$\frac{25}{73} \text{ or } \boxed{5}$}%
}}}
\put(4771,-916){\makebox(0,0)[lb]{\smash{\fontsize{9}{10.8}\usefont{T1}{ptm}{m}{n}{\color[rgb]{0,0,0}$\boxed{\frac{55}{21}} \text{ or } \frac{1}{305}$}%
}}}
\end{picture}%
\end{center}
\caption{Proving the uniqueness of the deformations shown in Figure \ref{fig:Parametrization}.}
\label{fig:Combinations}
\end{figure}

\section{Infinitesimal isometric deformations}
\label{sec:IID}
An infinitesimal isometry of $\RR^3$ is a vector field of the form $\xi(x) = Ax+b$ with a skew-symmetric matrix~$A$ or, equivalently, $\xi(x) = r \times x + b$.
Intuitively this is an infinitesimal translation ( if $r = 0$) or an infinitesimal screw motion (if $r \ne 0$) about an axis parallel to $r$.
An infinitesimal isometric deformation of a polyhedral surface is an assignment of an infinitesimal isometry to every face of the surface such that their restrictions to common edges agree.
An infinitesimal isometric deformation is called trivial if all isometries of the faces are the same.
A polyhedral surface is called infinitesimally flexible if it admits a non-trivial infinitesimal isometric deformation.

For any two faces $F_1$ and $F_2$ sharing an edge the difference of the respective vector fields has to vanish on the edge.
This implies $\xi_1(x) - \xi_2(x) = r \times x$ with $r$ parallel to the common edge.
Summing these differences for pairs of adjacent faces around a vertex one arrives at the following classical lemma.

\begin{lem}
Let $F_1, F_2, \ldots, F_n$ be faces surrounding a vertex of a polyhedral surface, in this cyclic order, and let $r_i(x)$ be the vector of the relative rotation of the face $F_{i+1}$ with respect to the face $F_i$, with indices taken modulo $n$.
Then one has $\sum_{i=1}^n r_i = 0$.
\end{lem}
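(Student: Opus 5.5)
The plan is to reduce the statement to the closing condition for infinitesimal rotations around a single vertex. I will express all relative velocities in terms of the infinitesimal motions of the faces.

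Assign to each face $F_i$ its infinitesimal isometry $\xi_i(x) = A_i x + b_i$, equivalently $\xi_i(x) = \omega_i \times x + b_i$. For consecutive faces $F_i$ and $F_{i+1}$ sharing the edge $e_i$ through the vertex, set $\xi_{i+1} - \xi_i =: \eta_i$. As observed just before the lemma, $\eta_i$ vanishes on the common edge. I will first verify this claim carefully.

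\begin{itemize}
\item $\eta_i$ is an infinitesimal isometry, $\eta_i(x) = (\omega_{i+1}-\omega_i)\times x + (b_{i+1}-b_i)$.
\item An infinitesimal isometry vanishing at two distinct points $p, q$ of the edge has its rotation vector parallel to $q-p$. Indeed, subtracting gives $(\omega_{i+1}-\omega_i)\times(q-p)=0$.
\item Hence $\eta_i(x) = r_i \times (x-p)$ with $r_i = \omega_{i+1}-\omega_i$ parallel to $e_i$.
\end{itemize}

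Here I take $r_i$ to mean the rotation vector of the relative infinitesimal motion. The lemma's phrase ``$r_i(x)$'' I read as this vector.

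The key step is telescoping. Summing over $i = 1, \dots, n$ with indices modulo $n$ gives
\[
\sum_{i=1}^n r_i = \sum_{i=1}^n (\omega_{i+1} - \omega_i) = \omega_{n+1} - \omega_1 = 0,
\]
since $F_{n+1} = F_1$. Alternatively, one can sum the vector fields directly: $\sum_i \eta_i = \xi_{n+1} - \xi_1 = 0$ as a vector field. Since all $\eta_i$ vanish at the common vertex $p$, this says $\left(\sum_i r_i\right)\times(x-p) = 0$ for all $x$. That forces $\sum_i r_i = 0$, because a vector whose cross product with every vector vanishes is zero.

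The only real obstacle is bookkeeping. One must make sure the rotation vector of a relative motion is well defined, so that the linear part of an infinitesimal isometry determines $r$ uniquely; this is immediate from skew-symmetry. One must also use a consistent sign convention, $F_{i+1}$ relative to $F_i$, around the cycle so that the sum genuinely telescopes. No geometric hypotheses such as convexity or non-coplanarity are needed. The identity is purely kinematic and holds for any cyclic arrangement of faces around a vertex.
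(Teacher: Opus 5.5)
Your proof is correct and is the same argument the paper gives in the sentences just before the lemma: each difference $\xi_{i+1}-\xi_i$ is an infinitesimal rotation about the shared edge, and summing these differences around the vertex telescopes to zero. Your version is a little more careful than the paper's. You center the rotation at a point on the edge rather than implicitly putting the origin at the vertex, and you check explicitly that each $r_i$ is parallel to $e_i$, which is what Theorem \ref{thm:Weights} later relies on.
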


This leads to the following characterization of the infinitesimal flexibility of polyhedral surface.

\begin{thm}
\label{thm:Weights}
A simply-connected polyhedral surface is infinitesimally flexible if and only if there is an assignment of real weights $\omega_{ij}$ to its interior edges $p_ip_j$ such that for every interior vertex $p_i$ one has
\begin{equation}
\label{eqn:Weights}
\sum_{j} \omega_{ij}(p_i - p_j) = 0.
\end{equation}
\end{thm}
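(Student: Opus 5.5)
Both directions go through the relative rotation vectors of the lemma just proved. Given an infinitesimal isometric deformation with face isometries $\xi_F(x) = r_F \times x + b_F$, for two faces $F, F'$ sharing the edge $p_ip_j$ the difference $\xi_{F'} - \xi_F$ vanishes on that edge. Hence it is an infinitesimal rotation about the line $p_ip_j$, namely $x \mapsto \rho \times (x - p_i)$ with $\rho$ parallel to $p_i - p_j$. I will write $\rho = \omega_{ij}(p_i - p_j)$ and orient everything consistently, say $F$ to the left and $F'$ to the right of the edge directed from $p_i$ to $p_j$, using the fixed orientation of the surface. Reversing the direction of the edge swaps the two faces and changes the sign of $p_i - p_j$, so $\omega_{ij} = \omega_{ji}$ is a well-defined weight on the undirected edge. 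Going around an interior vertex $p_i$, the rotations about the incident edges (all passing through $p_i$) must compose to zero, because one returns to the starting face. By the lemma this gives $\sum_j \omega_{ij}(p_i - p_j) = 0$, which is exactly \eqref{eqn:Weights}. Only the rotational parts need to be summed: the translational parts cancel automatically, since all the axes pass through $p_i$.

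For the converse, given weights satisfying \eqref{eqn:Weights}, I fix a base face $F_0$ with $\xi_{F_0} = 0$. To reach any other face $F$, I take a path of faces through interior edges and define $\xi_F$ as the sum, along the path, of the edge rotations $x \mapsto \omega_{ij}(p_i - p_j) \times (x - p_i)$ with the orientation convention above. The resulting $\xi_F$ is independent of the chosen path provided the sum around every closed path of faces is zero. Around a single interior vertex the rotational part vanishes by \eqref{eqn:Weights}, and the translational part vanishes because every axis passes through $p_i$: one has $\sum_j \omega_{ij}(p_i-p_j)\times(x-p_i) = 0$. Since the surface is simply connected, the dual graph of faces has every cycle generated by the elementary cycles around interior vertices. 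Path independence then follows by the same standard argument as in the proof of Theorem~\ref{thm:GlobFlex}. By construction, adjacent faces receive isometries that agree on their common edge.

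It remains to match nontriviality. If some $\omega_{ij} \neq 0$, the two faces at $p_ip_j$ get different isometries, because a nonzero rotation about a line is not the zero field. Conversely, a nontrivial deformation has some pair of adjacent faces with different isometries, since the surface is connected, and so some $\omega_{ij} \neq 0$. Accordingly, the statement should be read with nonzero weights.

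The main obstacle is the topological step: that simple connectivity of the surface implies the vertex cycles generate the cycle space of the face-adjacency graph through interior edges. I would justify it by noting that a cycle in the dual graph is a loop in the surface, that this loop is null-homotopic, and that a null-homotopic loop can be decomposed into loops around interior vertices. A secondary point requiring care is the sign and orientation bookkeeping that makes $\omega_{ij} = \omega_{ji}$ consistent.
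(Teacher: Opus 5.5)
Your proposal is correct and takes essentially the paper's route. The paper obtains the characterization directly from the lemma that the relative rotation vectors (each parallel to its edge, hence of the form $\omega_{ij}(p_i-p_j)$) sum to zero around an interior vertex, and your path summation over the face-adjacency graph, which uses simple connectivity as in the standard argument of Theorem~\ref{thm:GlobFlex}, fills in the converse that the paper leaves implicit. Your remark that the weights must be required to be not all zero is the correct reading of the statement.
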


Equation \eqref{eqn:Weights} is equivalent to the existence of a reciprocal-parallel mesh of dual combinatorics.
The theorem goes back probably to the end of the XIX century, but we cannot provide an exact reference.

The globalization theorem for infinitesimal isometric deformations holds under the assumption that no flattening occurs.
Its proof is similar to that of Theorem \ref{thm:GlobFlex}.

\begin{thm}
\label{thm:GlobInfFlex}
If an $m \times n$ discrete conjugate net has no pair of coplanar adjacent faces and all of its $3 \times 3$-subnets are infinitesimally flexible, then the net is infinitesimally flexible as a whole.
\end{thm}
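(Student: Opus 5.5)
The plan is to mimic the proof of Theorem \ref{thm:GlobFlex}, replacing the configuration space of a $2\times 2$-complex by its linearization. By Theorem \ref{thm:Weights}, which applies since an $m\times n$-net is simply connected, an infinitesimal isometric deformation is the same as an assignment of weights $\omega_e$ to the interior edges satisfying \eqref{eqn:Weights} at every interior vertex. Equivalently, it is an assignment of infinitesimal dihedral angle changes $\dot\phi_e$ compatible around every vertex. At a vertex $p$ of valence four with edge vectors $v_1,\dots,v_4$, the equilibrium condition $\sum_k \omega_k v_k = 0$ is a linear system of rank three. It has rank three because no three of the edge vectors are coplanar: two consecutive ones span a face, and if the third lay in that plane then two adjacent faces would be coplanar. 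So its solution space is one-dimensional. In that space each $\omega_k$ is nonzero, since vanishing of one weight would force a linear dependence among the three remaining edge vectors, again contradicting the no-coplanarity hypothesis. This is the infinitesimal analogue of Theorem \ref{thm:NoFlatSubmanifold}: any single weight at a vertex determines the others linearly, via nonzero ratios. I would also derive this from the derivative formulas in the proof of Theorem \ref{thm:NoFlatSubmanifold}.

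Consequently, for two edges $e,e'$ sharing an interior vertex $p$ there is a well-defined nonzero number $\lambda_{e'e}^{p}$ with $\omega_{e'} = \lambda_{e'e}^{p}\,\omega_e$ for every infinitesimal deformation of the star of $p$. One subtlety is that an interior edge joining two interior vertices has a weight entering both vertex equations, and the weights at the two ends must agree. That is automatic here because $\omega_{ij}=\omega_{ji}$ is a single edge quantity. I would then define the weights globally as in the proof of Theorem \ref{thm:GlobFlex}. Fix an edge $e_1$ and set $\omega_{e_1}=1$. For any edge $e$ choose a path $e_1,\dots,e_n=e$ of consecutive edges sharing interior vertices, and set $\omega_e = \lambda_{e_ne_{n-1}}\cdots\lambda_{e_2e_1}$. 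Well-definedness requires that the product of ratios around every elementary cycle be $1$. The elementary cycles are the four edges $e_1,\dots,e_4$ around a face all four of whose vertices are interior, together with the trivial cycles inside a single vertex star, which hold by linearity. The edge graph of the interior of the net, with vertex stars as cliques, has its cycle space generated by these. So the usual simply-connectedness argument reduces everything to the local condition $\lambda_{14}\lambda_{43}\lambda_{32}\lambda_{21}=1$.

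The key step is to verify this local condition from the hypothesis. Such a face together with its neighbours forms a $3\times 3$-subnet whose four interior vertices are the corners of the face. By hypothesis this subnet has a nontrivial infinitesimal deformation, hence by Theorem \ref{thm:Weights} a nonzero weight assignment. By the nonvanishing shown above, nonzero at one vertex propagates to all its edges. So all four inner edges carry nonzero weights related by the $\lambda$'s, and traversing the cycle returns $\omega_{e_1}=\lambda_{14}\lambda_{43}\lambda_{32}\lambda_{21}\omega_{e_1}$ with $\omega_{e_1}\ne 0$, which gives the product condition.

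The main obstacle I expect is making sure that the nontrivial infinitesimal deformation of the $3\times3$-subnet indeed has nonzero weights. Nontriviality means that not all weights vanish, and connectivity through vertices with nonzero ratios then forces all of them nonzero. A second point is the boundary edges, where weights are not required: I would check that edges incident to no interior vertex impose nothing. Finally, the globally defined weights are nonzero, so the net is infinitesimally flexible.
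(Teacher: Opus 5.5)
Your proof is correct and is essentially the paper's argument. The paper fixes a basic solution of \eqref{eqn:Weights} at each interior vertex and propagates scaling factors along edges from vertex to vertex. You instead propagate edge weights through vertex stars via the ratios $\lambda^p_{e'e}$. This produces exactly the same cycle condition around each face with four interior corners, and both arguments discharge it using the infinitesimal flexibility of the $3\times 3$-subnet centred at that face. Your extra details fill in steps the paper only asserts: the rank-three and nonvanishing-component claims derived from the no-coplanarity hypothesis, and the propagation of a nonzero weight to all edges of the subnet.
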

\begin{proof}
If there are no coplanar adjacent faces, then among the four edges adjacent to an interior vertex~$p_i$ no three are coplanar, and the equation \eqref{eqn:Weights} has a one-dimensional solution space spanned by a vector with non-zero components.
Choose a basic solution for each of the equations \eqref{eqn:Weights}.
In order to find a solution for the whole system of equations, one has to scale each of the individual solutions so that the values of $\omega_{ij}$ coming from two incident vertices agree.
An arbitrarily chosen scaling factor at one of the vertices propagates along any edge path in a unique way.
It suffices to show that the resulting value at any other vertex is independent of the choice of the path.
By a standard argument this holds if the propagation along the boundary of each square yields no contradiction.
This is equivalent to every $3 \times 3$-subnet being infinitesimally flexible, and the theorem is proved.
\end{proof}

\begin{rem}
Kokotsakis \cite{Kokotsakis1933} gave a characterization of the infinitesimal flexibility of a $3 \times 3$ discrete conjugate net in geometric terms, as collinearity of three points constructed by intersecting certain triples of planes.
\end{rem}

The following example shows that the globalization may fail in the presence of flattenings.

\begin{thm}
\label{thm:Counterexample3}
In the $3 \times 4$ discrete conjugate net depicted in Figures \ref{fig:CounterexampleInfRig0} and \ref{fig:CounterexampleInfRig} both of the $3 \times 3$-subnets are infinitesimally flexible while the net as a whole is infinitesimally rigid.
\end{thm}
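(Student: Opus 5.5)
We do not see the figures, so the plan is to reduce the claim to the linear weight system of Theorem \ref{thm:Weights} and to read off the vertex coordinates from Figures \ref{fig:CounterexampleInfRig0} and \ref{fig:CounterexampleInfRig}. Presumably they exhibit flattenings, that is, coplanar adjacent faces or three coplanar edges at some interior vertex; otherwise Theorem \ref{thm:GlobInfFlex} would contradict the claim. A $3\times 4$-net has six interior vertices in two rows of three, and seventeen interior edges: seven horizontal and ten vertical.

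By Theorem \ref{thm:Weights}, infinitesimal flexibility is equivalent to a nonzero solution of the homogeneous system \eqref{eqn:Weights}. This system has three scalar equations per interior vertex, with unknowns the weights $\omega_{ij}$ on the interior edges.

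\emph{Step 1: local solution spaces.} For each interior vertex $p_i$ I would determine the solution space of \eqref{eqn:Weights} in the four weights of its incident edges. At a vertex where no three of the four edge vectors are coplanar, this space is one-dimensional and spanned by a vector with all entries nonzero, as in the proof of Theorem \ref{thm:GlobInfFlex}. At a flattened vertex the edges split into degenerate configurations, and the structure of the solution space changes. Suppose the two edges on one line are collinear and the other two are not. Then the weights on the non-collinear pair are forced to vanish, and the collinear pair carries a one-dimensional solution. If instead three edges are coplanar, the fourth weight must vanish. I would tabulate these local solution spaces, with explicit vectors, for all six vertices.

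\emph{Step 2: subnet flexibility.} For each $3\times 3$-subnet (four interior vertices, twelve interior edges) I would exhibit an explicit nonzero weight assignment satisfying all four vertex equations. Starting from one vertex, the scalings propagate around the square, exactly as in the proof of Theorem \ref{thm:GlobInfFlex}. Where a flattening forces some weights to vanish, the square condition can be satisfied trivially. Alternatively, a vanishing weight decouples the propagation, and this gives the freedom needed for a nonzero solution.

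\emph{Step 3: rigidity of the whole net.} I would show that every solution for the full net is zero. The two subnets share the middle column of two interior vertices and the edges among them. The subnet solution spaces from Step~2 should each be one-dimensional, or at least small. A global solution restricts to solutions on both subnets that must agree on the shared edges. I expect the computation to show the following. Each subnet's solution is nonzero on some shared edge. On that edge, the solution of one subnet forces a weight of zero where the other subnet's solution requires a nonzero one, or the two induced ratios on the shared vertical edges differ. Together these force all weights to vanish.

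This mirrors the first counterexample, where the central quadrilateral was compatible with each neighbor separately but the whole chain was overdetermined. Concretely, I would write the $18\times 17$ linear system, or better the block of it coupling the two subnets, and check that it has full column rank. The main obstacle is Step~3: certifying that no exotic solution exists, in particular one that vanishes on the flattened edges and lives on some other support. To handle this I would organize the computation by support patterns forced at the degenerate vertices, and then verify nondegeneracy of the remaining small determinant by explicit numbers from the figure.
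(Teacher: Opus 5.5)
Your plan is the paper's argument. Solve \eqref{eqn:Weights} on each $3\times 3$-subnet and find each solution unique up to scaling: the vanishing dihedral angles at the top and bottom central edges make three edges coplanar at each central interior vertex, which forces weight $0$ on the middle central edge and makes the square condition inside each subnet vacuous. Then conclude rigidity because the two subnet solutions cannot be matched on the shared central edges. One small slip: the $17$ interior edges split as $8$ horizontal and $9$ vertical, not $7$ and $10$.
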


\begin{figure}[ht]
\begin{center}
\includegraphics[width=.45\textwidth]{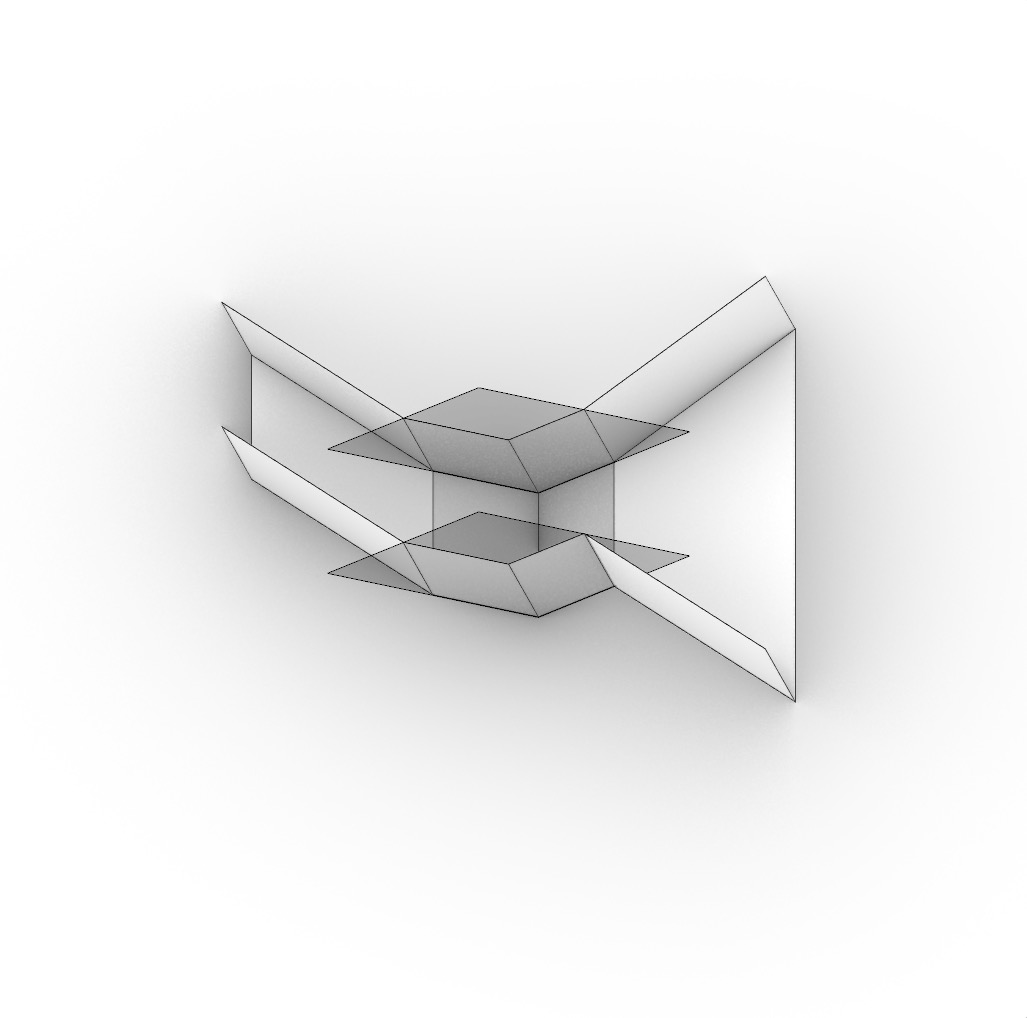}
\includegraphics[width=.45\textwidth]{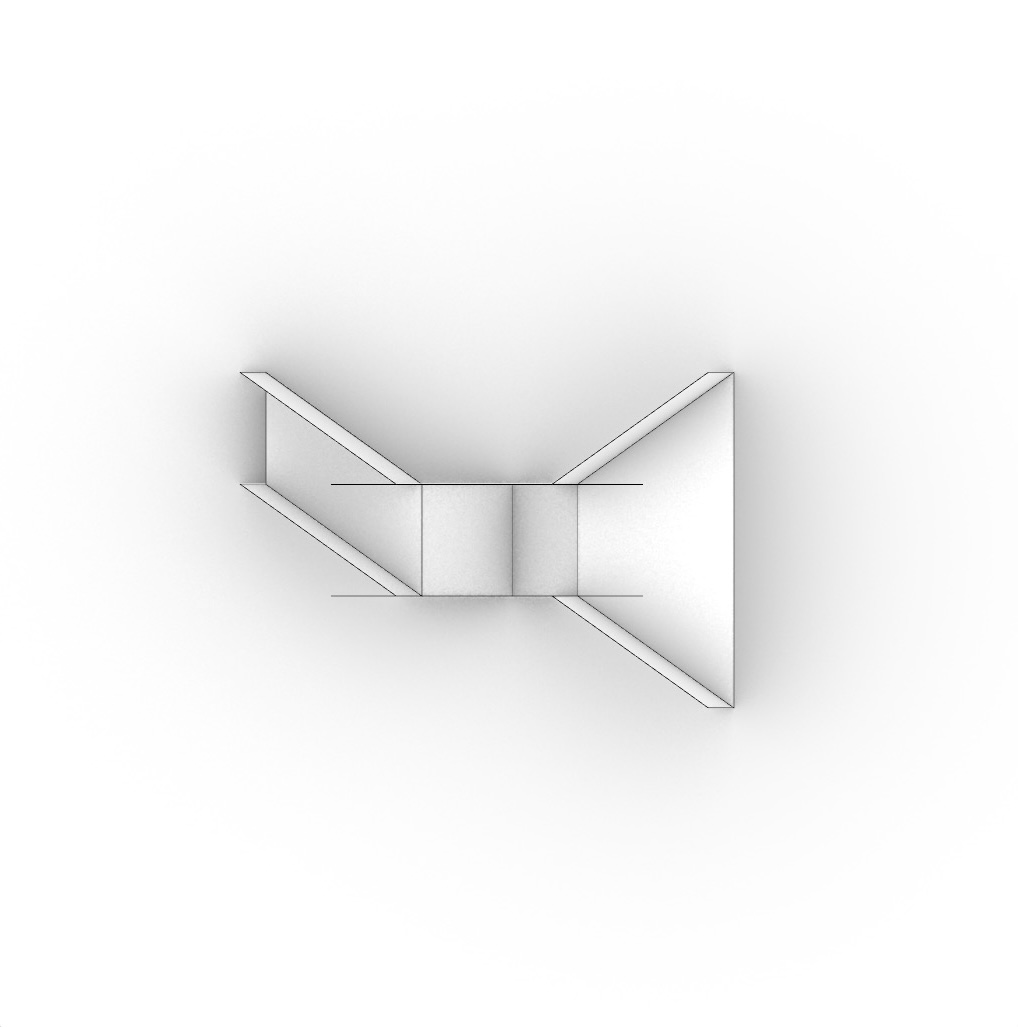}
\end{center}
\caption{An infinitesimally rigid $3 \times 4$-net with infinitesimally flexible $3 \times 3$-subnets.}
\label{fig:CounterexampleInfRig0}
\end{figure}

\begin{figure}[ht]
\begin{center}
\begin{picture}(0,0)%
\includegraphics{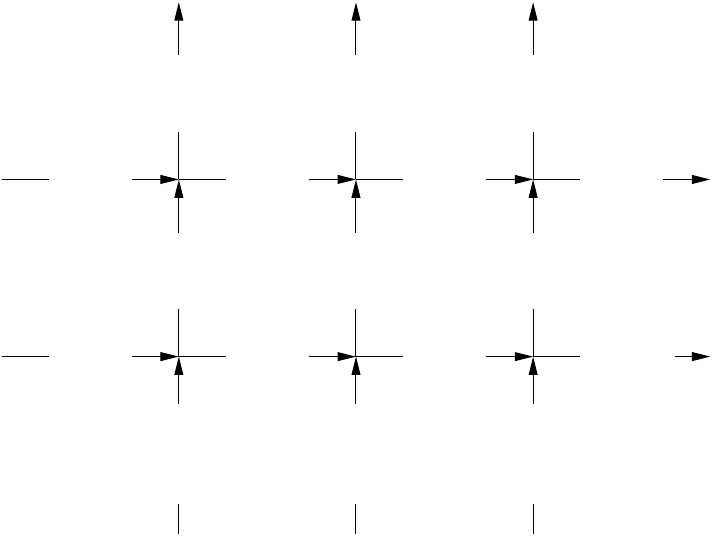}%
\end{picture}%
\setlength{\unitlength}{4144sp}%
\begin{picture}(5424,4074)(-11,-3223)
\put(406,-1906){\makebox(0,0)[lb]{\smash{\fontsize{9}{10.8}\usefont{T1}{ptm}{m}{n}{\color[rgb]{0,0,0}$(1,1,-1)$}%
}}}
\put(1396,-2941){\rotatebox{90.0}{\makebox(0,0)[lb]{\smash{\fontsize{9}{10.8}\usefont{T1}{ptm}{m}{n}{\color[rgb]{0,0,0}$(0,-1,-1)$}%
}}}}
\put(2746,-2941){\rotatebox{90.0}{\makebox(0,0)[lb]{\smash{\fontsize{9}{10.8}\usefont{T1}{ptm}{m}{n}{\color[rgb]{0,0,0}$(0,-1,-1)$}%
}}}}
\put(1756,-1906){\makebox(0,0)[lb]{\smash{\fontsize{9}{10.8}\usefont{T1}{ptm}{m}{n}{\color[rgb]{0,0,0}$(0,0,-1)$}%
}}}
\put(4096,-2941){\rotatebox{90.0}{\makebox(0,0)[lb]{\smash{\fontsize{9}{10.8}\usefont{T1}{ptm}{m}{n}{\color[rgb]{0,0,0}$(0,-1,-1)$}%
}}}}
\put(406,-556){\makebox(0,0)[lb]{\smash{\fontsize{9}{10.8}\usefont{T1}{ptm}{m}{n}{\color[rgb]{0,0,0}$(1,1,-1)$}%
}}}
\put(1756,-556){\makebox(0,0)[lb]{\smash{\fontsize{9}{10.8}\usefont{T1}{ptm}{m}{n}{\color[rgb]{0,0,0}$(0,0,-1)$}%
}}}
\put(4456,-556){\makebox(0,0)[lb]{\smash{\fontsize{9}{10.8}\usefont{T1}{ptm}{m}{n}{\color[rgb]{0,0,0}$(1,1,-1)$}%
}}}
\put(1396,-106){\rotatebox{90.0}{\makebox(0,0)[lb]{\smash{\fontsize{9}{10.8}\usefont{T1}{ptm}{m}{n}{\color[rgb]{0,0,0}$(0,1,1)$}%
}}}}
\put(2746,-106){\rotatebox{90.0}{\makebox(0,0)[lb]{\smash{\fontsize{9}{10.8}\usefont{T1}{ptm}{m}{n}{\color[rgb]{0,0,0}$(0,1,1)$}%
}}}}
\put(4096,-106){\rotatebox{90.0}{\makebox(0,0)[lb]{\smash{\fontsize{9}{10.8}\usefont{T1}{ptm}{m}{n}{\color[rgb]{0,0,0}$(0,1,1)$}%
}}}}
\put(3151,-556){\makebox(0,0)[lb]{\smash{\fontsize{9}{10.8}\usefont{T1}{ptm}{m}{n}{\color[rgb]{0,0,0}$(0,1,0)$}%
}}}
\put(3151,-1906){\makebox(0,0)[lb]{\smash{\fontsize{9}{10.8}\usefont{T1}{ptm}{m}{n}{\color[rgb]{0,0,0}$(0,1,0)$}%
}}}
\put(4434,-1906){\makebox(0,0)[lb]{\smash{\fontsize{9}{10.8}\usefont{T1}{ptm}{m}{n}{\color[rgb]{0,0,0}$(-1,1,-1)$}%
}}}
\put(1396,-1456){\rotatebox{90.0}{\makebox(0,0)[lb]{\smash{\fontsize{9}{10.8}\usefont{T1}{ptm}{m}{n}{\color[rgb]{0,0,0}$(1,0,0)$}%
}}}}
\put(2746,-1501){\rotatebox{90.0}{\makebox(0,0)[lb]{\smash{\fontsize{9}{10.8}\usefont{T1}{ptm}{m}{n}{\color[rgb]{0,0,0}$(1,0,0)$}%
}}}}
\put(4096,-1501){\rotatebox{90.0}{\makebox(0,0)[lb]{\smash{\fontsize{9}{10.8}\usefont{T1}{ptm}{m}{n}{\color[rgb]{0,0,0}$(1,0,0)$}%
}}}}
\end{picture}%
\end{center}
\caption{Exact data for the discrete conjugate net in Figure \ref{fig:CounterexampleInfRig0}. The numbers at the edges are the components of the vectors between adjacent vertices, with arrows indicating the directions.}
\label{fig:CounterexampleInfRig}
\end{figure}

\begin{proof}
The principal feature of the net in Figure \ref{fig:CounterexampleInfRig} is that the dihedral angles at the top central and the bottom central edges vanish.
Figure \ref{fig:InfRigWeights} shows the weights assignment to the edges of each of the two $3 \times 3$-subnets satisfying the conditions \eqref{eqn:Weights}.
These assignments are unique up to scaling and do not agree on the central edges.
Therefore the $3 \times 4$ net is infinitesimally rigid.
\end{proof}

\begin{figure}[ht]
\begin{center}
\begin{picture}(0,0)%
\includegraphics{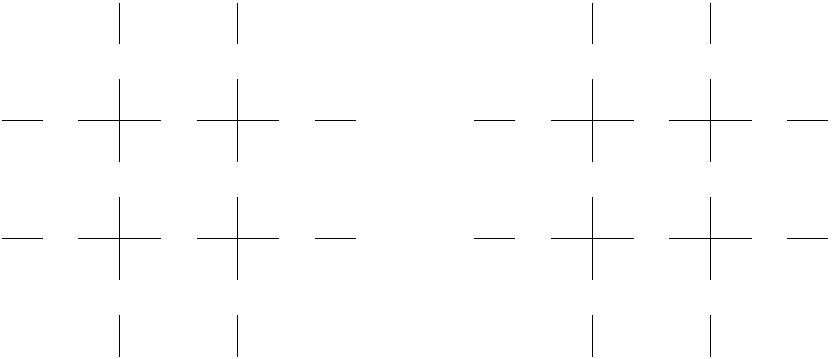}%
\end{picture}%
\setlength{\unitlength}{4144sp}%
\begin{picture}(6324,2724)(-11,-1873)
\put(2206,-1006){\makebox(0,0)[lb]{\smash{\fontsize{9}{10.8}\usefont{T1}{ptm}{m}{n}{\color[rgb]{0,0,0}$1$}%
}}}
\put(406,-1006){\makebox(0,0)[lb]{\smash{\fontsize{9}{10.8}\usefont{T1}{ptm}{m}{n}{\color[rgb]{0,0,0}$\frac12$}%
}}}
\put(1306,-1006){\makebox(0,0)[lb]{\smash{\fontsize{9}{10.8}\usefont{T1}{ptm}{m}{n}{\color[rgb]{0,0,0}$1$}%
}}}
\put(2161,-106){\makebox(0,0)[lb]{\smash{\fontsize{9}{10.8}\usefont{T1}{ptm}{m}{n}{\color[rgb]{0,0,0}$-1$}%
}}}
\put(856,-556){\makebox(0,0)[lb]{\smash{\fontsize{9}{10.8}\usefont{T1}{ptm}{m}{n}{\color[rgb]{0,0,0}$\frac12$}%
}}}
\put(856,-1456){\makebox(0,0)[lb]{\smash{\fontsize{9}{10.8}\usefont{T1}{ptm}{m}{n}{\color[rgb]{0,0,0}$\frac12$}%
}}}
\put(347,-106){\makebox(0,0)[lb]{\smash{\fontsize{9}{10.8}\usefont{T1}{ptm}{m}{n}{\color[rgb]{0,0,0}$-\frac12$}%
}}}
\put(1269,-106){\makebox(0,0)[lb]{\smash{\fontsize{9}{10.8}\usefont{T1}{ptm}{m}{n}{\color[rgb]{0,0,0}$-1$}%
}}}
\put(785,344){\makebox(0,0)[lb]{\smash{\fontsize{9}{10.8}\usefont{T1}{ptm}{m}{n}{\color[rgb]{0,0,0}$-\frac12$}%
}}}
\put(1756,344){\makebox(0,0)[lb]{\smash{\fontsize{9}{10.8}\usefont{T1}{ptm}{m}{n}{\color[rgb]{0,0,0}$1$}%
}}}
\put(1756,-556){\makebox(0,0)[lb]{\smash{\fontsize{9}{10.8}\usefont{T1}{ptm}{m}{n}{\color[rgb]{0,0,0}$0$}%
}}}
\put(1711,-1456){\makebox(0,0)[lb]{\smash{\fontsize{9}{10.8}\usefont{T1}{ptm}{m}{n}{\color[rgb]{0,0,0}$-1$}%
}}}
\put(4006,-1006){\makebox(0,0)[lb]{\smash{\fontsize{9}{10.8}\usefont{T1}{ptm}{m}{n}{\color[rgb]{0,0,0}$1$}%
}}}
\put(4906,-1006){\makebox(0,0)[lb]{\smash{\fontsize{9}{10.8}\usefont{T1}{ptm}{m}{n}{\color[rgb]{0,0,0}$1$}%
}}}
\put(4411,-1456){\makebox(0,0)[lb]{\smash{\fontsize{9}{10.8}\usefont{T1}{ptm}{m}{n}{\color[rgb]{0,0,0}$-1$}%
}}}
\put(4456,-556){\makebox(0,0)[lb]{\smash{\fontsize{9}{10.8}\usefont{T1}{ptm}{m}{n}{\color[rgb]{0,0,0}$0$}%
}}}
\put(5806,-1006){\makebox(0,0)[lb]{\smash{\fontsize{9}{10.8}\usefont{T1}{ptm}{m}{n}{\color[rgb]{0,0,0}$\frac12$}%
}}}
\put(5356,-556){\makebox(0,0)[lb]{\smash{\fontsize{9}{10.8}\usefont{T1}{ptm}{m}{n}{\color[rgb]{0,0,0}$\frac12$}%
}}}
\put(5356,-1456){\makebox(0,0)[lb]{\smash{\fontsize{9}{10.8}\usefont{T1}{ptm}{m}{n}{\color[rgb]{0,0,0}$\frac12$}%
}}}
\put(5356,344){\makebox(0,0)[lb]{\smash{\fontsize{9}{10.8}\usefont{T1}{ptm}{m}{n}{\color[rgb]{0,0,0}$\frac12$}%
}}}
\put(5806,-106){\makebox(0,0)[lb]{\smash{\fontsize{9}{10.8}\usefont{T1}{ptm}{m}{n}{\color[rgb]{0,0,0}$\frac12$}%
}}}
\put(4906,-106){\makebox(0,0)[lb]{\smash{\fontsize{9}{10.8}\usefont{T1}{ptm}{m}{n}{\color[rgb]{0,0,0}$1$}%
}}}
\put(4006,-106){\makebox(0,0)[lb]{\smash{\fontsize{9}{10.8}\usefont{T1}{ptm}{m}{n}{\color[rgb]{0,0,0}$1$}%
}}}
\put(4411,344){\makebox(0,0)[lb]{\smash{\fontsize{9}{10.8}\usefont{T1}{ptm}{m}{n}{\color[rgb]{0,0,0}$-1$}%
}}}
\end{picture}%
\end{center}
\caption{Weights assignments to the edges of $3 \times 3$-subnets of the net from Figure \ref{fig:CounterexampleInfRig}.}
\label{fig:InfRigWeights}
\end{figure}

\begin{rem}
Each of the $3 \times 3$-subnets of the net from Theorem \ref{thm:Counterexample3} is not only infinitesimally flexible, but also allows an isometric deformation.
The left hand side net is of translational type, the right hand side net is of symmetric type, see \cite{Stachel2010}.
Therefore this net provides a third counterexample to the globalization theorem of Schief--Bobenko--Hoffmann.
\end{rem}

\section{Further remarks}
\subsection{Flexibility over complex numbers}
Each of our three counterexamples has some distinctive features.

The first counterexample while being rigid over $\RR$ is flexible over $\CC$.
By this we mean the following.
The configuration space of a spherical quadrilateral (and thus that of a $2 \times 2$-complex of Euclidean quadrilaterals) becomes after the tangent half-angles substitution a real algebraic set.
The complexification of this algebraic set lifts the restriction on the range of the cosines in equations \eqref{eqn:CosineSystem}, and makes the discrete conjugate net ``flexible over complex numbers''.

The situation is different for the second counterexample.
We have used the tangent half-angles substitution in Section \ref{sec:ex2}, and found that the configuration space of each $2 \times 2$-subnet is a reducible algebraic set, see Lemma \ref{lem:ConfSpace}.
The configuration space of each of the two $3 \times 3$-subnets combines certain irreducible components of the configuration spaces of $2 \times 2$-subnets.
The rigidity stems from the fact that the left and the right $3 \times 3$-subnets select different components from the configuration spaces of the central $2 \times 2$-subnets.
Figure \ref{fig:Combinations} illustrates this.
As a result, the rigidity persists after complexification.

A net flexible over $\CC$ is necessarily infinitesimally flexible over $\RR$.
Indeed, the velocity field of a path in the complex configuration space (a smooth path exists due to the algebraicity) is a complex solution of the system \eqref{eqn:Weights}.
Since the system has real coefficients, the existence of a complex solution implies the existence of a real solution.
Therefore the first counterexample is infitesmially flexible over $\RR$.
As the second counterexample is also infitesmially flexible, the three counterexamples realize all possible combinations of (in)flexibility over $\CC$ and infinitesimal (in)flexibility over $\RR$.

\subsection{Analogies in planar kinematics}
The spherical linkage in Figure \ref{fig:2coupling} formed by the tangent images of the interior vertices reflects the structure of the first counterexample.
This linkage has an obvious counterpart in the Euclidean plane.
The coupled tangent images of the interior vertices of the other two counterexamples are more complicated and do not have obvious counterparts in the Euclidean plane.
There are, however, simple Euclidean planar linkages that exhibit properties similar to the second and the third of our counterexamples.

Let us start with a four-bar linkage with all bars of the same length.
We fix one of these bars and call it the base, the opposite bar will be called the coupler link, the remaining two bars will be called arms.
In the configurations where all joints are collinear, the motion of the linkage bifurcates: one branch corresponds to a circular translation of the coupler link and the other one to its pure rotation.
A four-bar linkage can also be viewed as a pin-jointed bar-plate framework, where the coupler plate and the base plate are linked over two arms.
This framework corresponds to the central $(3\times 2)$-subnets of the second and the third counterexample allowing a motion bifurcation.
The $(3\times 3)$-subnets correspond to finite flexible parallel mechanisms, which are obtained by connecting the coupler plate and the base plate by a further arm.
The graphs of these frameworks with an overconstrained mobility are given in Figure \ref{fig:graphs}.
Each of the additional arms supports only one of the two motion branches of the initial four-bar linkage.
As a consequence the parallel mechanism containing all four arms cannot move out from the bifurcation configuration as in the second and the third counterexamples.

In Figure \ref{fig:pms} the parallel manipulators which correspond to the graphs given in Figure \ref{fig:graphs} are illustrated.
The one in Figure \ref{fig:pms}a is infinitesimally flexible and thus similar to our second counterexample.
The one in Figure \ref{fig:pms}b is infinitesimally rigid and thus similar to our third counterexample.

\begin{figure}[ht]
\begin{center}
\begin{overpic}[height=70mm]{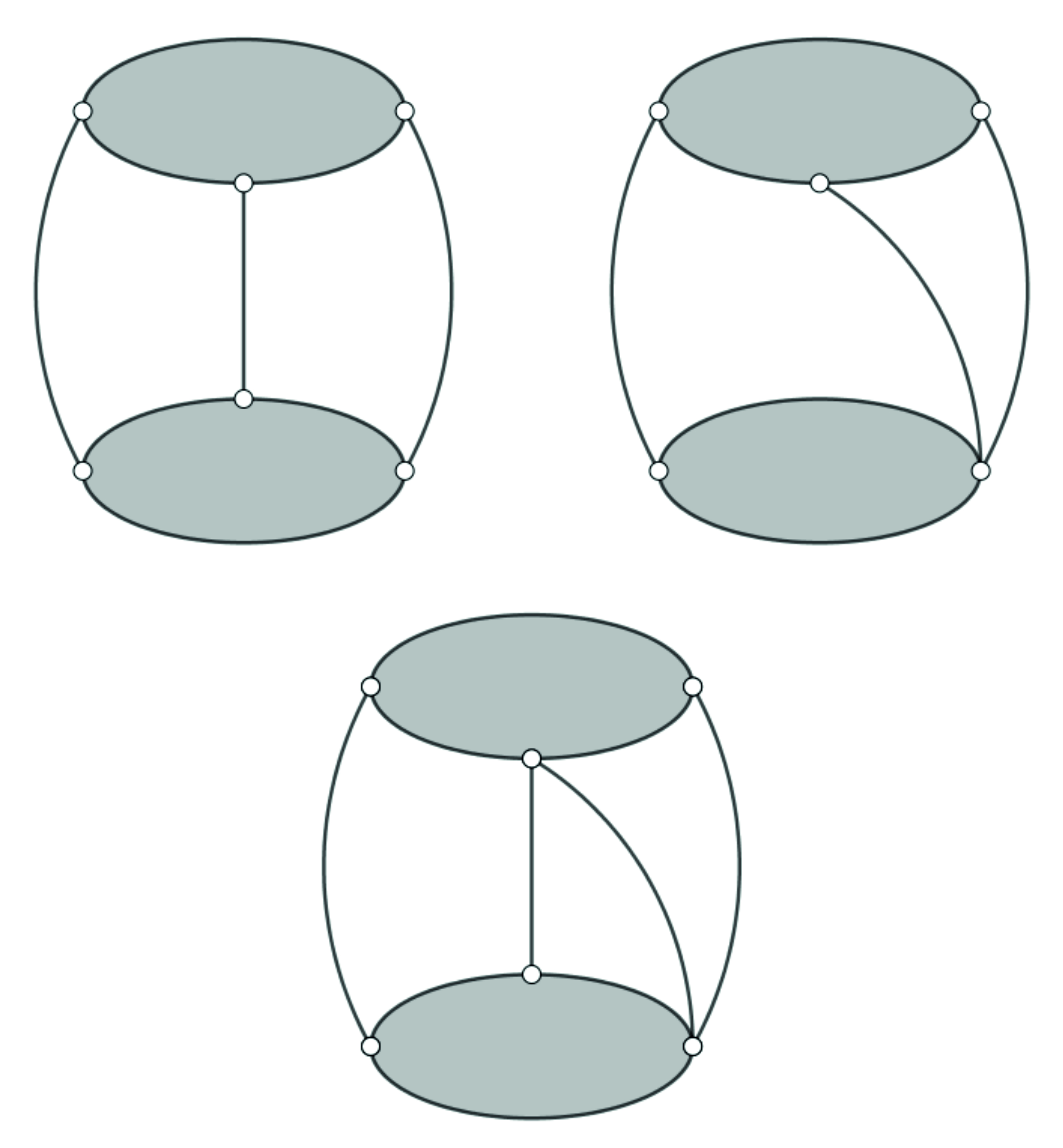}
  \begin{scriptsize}
    \put(0,0){a)}
    \put(12,65){$2$}
    \put(29,65){$2$}
    \put(12,81.5){$2$}
    \put(29,81.5){$2$}
     \put(0,73){$4$}
     \put(18,73){$4$}
     \put(40.5,73){$4$}
     \put(20,49){$4$}
     \put(20,98){$4$}
    \put(62,81.5){$2$}
    \put(79,81.5){$2$}
     \put(50,73){$4$}
     \put(77,73){$2$}
     \put(90.5,73){$4$}
     \put(70,49){$4$}
     \put(70,98){$4$}
      \put(37,15){$2$}
    \put(54,15){$2$}
    \put(37,31.5){$2$}
    \put(54,31.5){$2$}
     \put(25,23){$4$}
     \put(43,23){$4$}
     \put(65.5,23){$4$}
     \put(45,-1){$4$}
     \put(45,48){$4$}
      \put(52,23){$2$}
  \end{scriptsize}
\end{overpic}
\qquad \qquad
\begin{overpic}[height=70mm]{graphs_all}
  \begin{scriptsize}
    \put(0,0){b)}
     \put(12,65){$5$}
    \put(29,65){$3$}
    \put(12,81.5){$5$}
    \put(29,81.5){$3$}
     \put(0,73){$4$}
     \put(18,73){$4$}
     \put(40.5,73){$4$}
     \put(20,49){$4$}
     \put(20,98){$4$}
    \put(62,81.5){$5$}
    \put(79,81.5){$3$}
     \put(50,73){$4$}
     \put(77,73){$5$}
     \put(90.5,73){$4$}
     \put(70,49){$4$}
     \put(70,98){$4$}
      \put(37,15){$5$}
    \put(54,15){$3$}
    \put(37,31.5){$5$}
    \put(54,31.5){$3$}
     \put(25,23){$4$}
     \put(43,23){$4$}
     \put(65.5,23){$4$}
     \put(45,-1){$4$}
     \put(45,48){$4$}
      \put(52,23){$5$}
  \end{scriptsize}
\end{overpic}
\end{center}
\caption{Graphs of the pin-jointed bar-plate frameworks, where the plates are colored in gray, bars in black and the pin-joints in white. By assigning lengths to the bars, the inner geometry of the framework is determined. In (a) and (b) we consider two different geometries. The left graph corresponds to the parallel mechanism with the circular translation and the right graph to the one with the pure rotation. The central graph corresponds to the rigid parallel mechanism with four arms.  }
\label{fig:graphs}
\end{figure}

\begin{figure}[ht]
\begin{center}
\begin{overpic}[height=50mm]{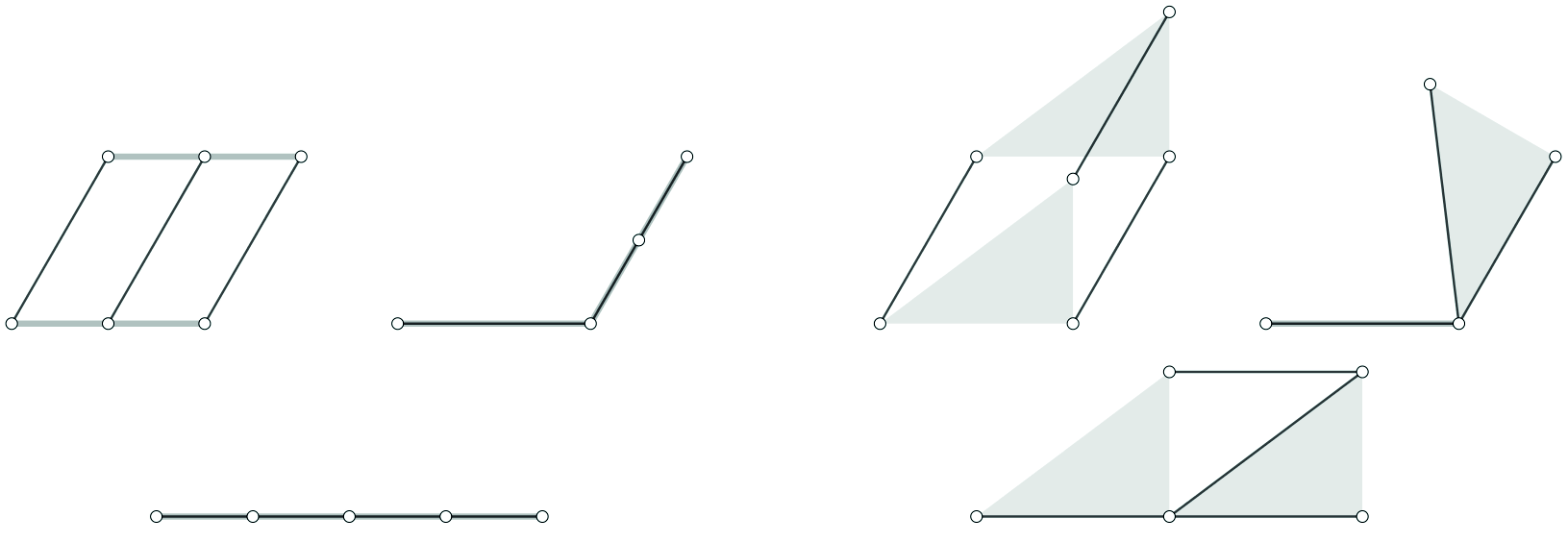}
  \begin{scriptsize}
    \put(0,0){a)}
    \put(50,0){b)}
  \end{scriptsize}
\end{overpic}
\end{center}
\caption{Parallel manipulators which correspond to the graphs given in Fig.\ \ref{fig:graphs}. The parallel mechanisms with four arms given in (a) is infinitesimal flexible but the one illustrated in (b) is first order rigid.
}
\label{fig:pms}
\end{figure}

\subsection{Non-simply connected discrete conjugate nets}
The globalization theorems \ref{thm:GlobFlex} and \ref{thm:GlobInfFlex} hold not only for rectangle-shaped discrete conjugate nets but for all simply-connected ones.
On the other hand, analysis and construction of flexible discrete conjugate nets of non-trivial topology requires methods different from those of \cite{SBH08, Stachel2010, Naw11, Naw12, Izmestiev2017}.
A special class of flexible tubes was studied in a recent work~\cite{Sharifmoghaddam2023}.

% \bibliographystyle{abbrv}
% \bibliography{Counterexample}

\end{document}